\documentclass[10pt, reqno]{amsart}
\usepackage{amsmath,amssymb,amsfonts, amscd,hyperref}
\usepackage{color}
\usepackage[english]{babel}

\newcommand{\Hmm}[1]{\leavevmode{\marginpar{\tiny%
$\hbox to 0mm{\hspace*{-0.5mm}$\leftarrow$\hss}%
\vcenter{\vrule depth 0.1mm height 0.1mm width \the\marginparwidth}%
\hbox to 0mm{\hss$\rightarrow$\hspace*{-0.5mm}}$\\\relax\raggedright
#1}}}

\newtheorem{thm}{Theorem}[section]
\newtheorem{cor}{Corollary}

\newtheorem{lemma}[thm]{Lemma}
\newtheorem{pro}[thm]{Proposition}

\theoremstyle{definition}

\newtheorem{eg}[thm]{Example}
\newtheorem{rem}[thm]{Remark}
\newtheorem*{Remark}{Remark}

\newcommand{\R}{{\mathbb R}}
\newcommand{\C}{{\mathbb C}}

\newcommand{\N}{{\mathbb N}}

\newcommand{\al}{{\alpha}}
\newcommand{\be}{{\beta}}
\newcommand{\de}{{\delta}}
\newcommand{\ph}{{\varphi}}
\newcommand{\eps}{{\varepsilon}}

\newcommand{\si}{{\sigma}}
\newcommand{\ka}{{\kappa}}
\newcommand{\lm}{{\lambda}}

\newcommand{\Lip}{{\mathrm{Lip}_{\eps}^{\infty}}}

\newcommand{\ov}[1]{\overline{ #1}}
\newcommand{\ow}[1]{\widetilde{ #1}}

\begin{document}

\title{On the $l^p$ spectrum of Laplacians on graphs}

\author[F. Bauer]{Frank Bauer}
\address{Frank Bauer, Department of Mathematics, Harvard University, One Oxford Street  Cambridge, MA 02138, USA and  Max-Planck-Institut f\"ur Mathematik in den Naturwissenschaften,
Inselstr. 22, 04103 Leipzig, Germany }
\email{fbauer@math.harvard.edu}

\author[B. Hua]{Bobo Hua}
\address{Bobo Hua, Max-Planck-Institut f\"ur Mathematik in den Naturwissenschaften,
Inselstr. 22, 04103 Leipzig, Germany} \email{bobo.hua@mis.mpg.de}

\author[M. Keller]{Matthias Keller}
\address{Matthias Keller, Einstein Institute of Mathematics, The Hebrew University of Jerusalem,
Jerusalem 91904, Israel} \email{mkeller@ma.huji.ac.il}

\date{\today}

%\tableofcontents

%\maketitle
% \frenchspacing
%%%%%%%%%%%%%%%%%%%%%%%%%%%%%%%%%%%%%%%%%%%%%%%%%%%%%%%%%%%%
% ABSTRACT
%%%%%%%%%%%%%%%%%%%%%%%%%%%%%%%%%%%%%%%%%%%%%%%%%%%%%%%%%%%%

\begin{abstract}We study the $p$-independence of
spectra of Laplace operators on graphs arising from regular
Dirichlet forms on discrete spaces. Here, a sufficient criterion is
given solely by a uniform subexponential growth condition. Moreover,
under a mild assumption on the measure we show a {one-sided
spectral} inclusion without any further assumptions. We study
applications to normalized Laplacians including symmetries of the
spectrum and a characterization for positivity of the Cheeger
constant. Furthermore, we consider Laplacians on planar
tessellations for which we relate the spectral $p$-independence to
assumptions on the curvature.
\end{abstract}

 \maketitle

 \section{Introduction}

In \cite{Simon80,Simon82} Simon conjectured that the spectrum of a
Schr\"{o}dinger operator acting on $L^p(\R^N)$ is $p$-independent.
Hempel and Voigt gave an affirmative answer in \cite{HempelVoigt86}
for a large class of potentials. Later this result was generalized
in various ways. Sturm \cite{Sturm93} showed $p$-independence of the
spectra for uniformly elliptic operators on a complete Riemannian
manifold with uniform subexponential volume growth and a lower bound
on the Ricci curvature. Moreover, Arendt \cite{Arendt94} proved
$p$-independence of the  spectra of uniformly elliptic operators in
$\R^N$ with Dirichlet or Neumann boundary conditions under the
assumption of upper Gaussian estimates for the corresponding
semigroups. While the proof strategies of Sturm and Arendt are
rather similar to the one used by Hempel and Voigt, Davies
\cite{Davies95} gave a simpler proof of the $p$-independence of the
spectrum under the stronger assumption of polynomial volume growth
and Gaussian upper bounds using the functional calculus developed in
\cite{Davies95b}. In recent works   $p$-independence of spectral
bounds are proven in the context of conservative Markov processes
\cite{Kusuoka,Takeda} and Feynman-Kac semigroups
\cite{Chen,DKK,TakedaTawara}. See also \cite{GHKLW} for  Laplace
operators on graphs with finite measure.

In this paper, we prove  $p$-independence of spectra for Laplace
operators on graphs under the assumption of uniform subexponential
volume growth. This  question was brought up in
\cite[page~378]{Davies07} by Davies. Our  framework are regular
Dirichlet forms on discrete sets as introduced in \cite{KL}. While
our result is similar to the one of Sturm {\cite{Sturm93}} for
elliptic operators on manifolds, we do not need to assume any type
of lower curvature bounds nor any type of bounded geometry. However,
in various classical examples, such as  Laplacians with standard
weights, this disparity is resolved by the fact that  uniform
subexponential growth implies bounded geometry in some cases. In
further contrast to \cite{Sturm93}, we do not assume any uniformity
of the coefficients in the divergence part of the operator such as
uniform ellipticity and, additionally, we allow for positive
potentials {(in general potentials bounded from below)}.

We overcome the difficulties resulting from unbounded geometry, by
the use of intrinsic metrics. While this concept is well established
for strongly local Dirichlet forms,  \cite{Sturm94}  it  was only
recently introduced for general regular Dirichlet forms by
Frank/Lenz/Wingert in \cite{FLW}. Since then, this concept already
proved to be very effective for the analysis on graphs, see
\cite{BKW,FOLZ,FOLZ2,GHM,HKW,Hu,Hu2, HKMW} where it also appears
under the name adapted metrics. Moreover, we employ rather weak heat
kernel estimates (with the log term instead of a square) by Folz,
\cite{FOLZ}, which is a generalization of \cite{Davies93} by Davies.
These weak estimates turn out to be sufficient to prove the
$p$-independence. Of course, the condition on uniform subexponential
growth is always expressed with respect to an intrinsic metric.

Another result of this paper is the inclusion of the
$\ell^{2}$-spectrum in the $\ell^{p}$-spectrum under the assumption
of lower bounds on the measure only.

As applications we discuss the normalized Laplace operator, for
which we prove several basic properties of the $\ell^{p}$ spectra
such as certain symmetries of the spectrum. Moreover, we discuss
consequences of ${p}$-independence on the Cheeger constant and give
an example of ${p}$-independence and superexponential volume growth.
Finally, we consider the case of planar tessellations which relates
curvature bounds to the volume growth. In particular, we use such
curvature conditions to recover results of Sturm, \cite{Sturm93} in
the setting of planar tessellations.

The paper is organized as follows. In the next section we introduce
the set up and present the main results. In
Section~\ref{s:preliminaries} we show several auxiliary results in
order to prove the main results in Sections~\ref{s:proof_usg}
and~\ref{s:proof_upm}. Applications to  normalized Laplacians are
considered in Section~\ref{s:normalized}. The final section,
Section~\ref{s:tessellations}, is devoted to planar tessellations
and consequences of  curvature bounds on the volume growth and
$p$-independence.

 %%%%%%%%%%%%% SET UP%%%%%%%%%%%%%%%%%%%%%%%

\section{Set up and main results}

%%%%%%%%%SUBSECTON%%%%%%%%%%%%%%%%%%%%%%%%%%%%%%%%%%%%%%%%%%%%%%%%%%%%%

\subsection{Graphs}
Assume that $X$ is a countable set equipped with the discrete
topology. A strictly positive function $m:X\to(0,\infty)$ gives a
Radon measure on $X$ of full support via $m(A)=\sum_{x\in A}m(x)$
for $A \subseteq X$, so that $(X,m)$ becomes a discrete measure
space.

A graph over $(X,m)$ is a pair $(b,c)$. Here, $c:X\to[0,\infty)$ and
$b:X\times X\to[0,\infty)$ is a  symmetric function  with zero
diagonal that satisfies
$$
\sum_{y\in  X}b(x, y)<\infty \quad \textup{ for  } x\in X.$$ We say
$x$ and $y $ are \emph{neighbors} or \emph{connected} by an edge if
$b(x,y)>0$ and we write $x\sim y$.  For convenience we assume that
there are no isolated vertices, i.e., every vertex has a neighbor.
We call $b$ \emph{locally finite} if each vertex has only finitely
many neighbors. The function $c$ can be interpreted either as
one-way-edges to infinity or a potential or a killing term.

The \emph{normalizing measure} $n:X\to(0,\infty)$ given by
\begin{align*}
    n(x)=\sum_{y\in X}b(x,y), \quad \textup{ for  } x\in X.
\end{align*}
often plays a distinguished role.  In the case where $b:X\times
X\to\{0,1\}$, $n(x)$ gives the number of neighbors of a vertex $x$.
If $n/m\le M$ for some fixed $M>0$ we say the graph has
\emph{bounded geometry}.

%%%%%%%%%SUBSECTON%%%%%%%%%%%%%%%%%%%%%%%%%%%%%%%%%%%%%%%%%%%%%%%%%%%%%

\subsection{Intrinsic metrics and uniform subexponential growth}
By a \emph{pseudo metric} we understand a function $d:X\times
X\to[0,\infty)$ that is symmetric, has zero diagonal and satisfies
the triangle inequality. Following \cite{FLW}, we call a pseudo
metric $d$  an \emph{intrinsic metric} for a graph $b$ on $(X,m)$ if
\begin{align*}
 \sum_{y\in X}b(x,y)d(x,y)^{2}\leq m(x)\quad \mbox{ for all } x\in X.
\end{align*}
For example one can always choose the path metric induced by the
edge weights $w(x,y)=((m/n)(x)\wedge (m/n)(y))^{\frac{1}{2}}$, for
$x\sim y,$ cf. e.g. \cite{Hu}. Moreover, we call $$s:=\sup\{d(x,y)\
|\ x\sim y, x,y\in X\}$$ the \emph{jump size} of $d$. Note that the
natural graph metric $d_{n}$ (i.e., the path metric with weights
$w(x,y)=1$ for $x\sim y$) is  intrinsic  if and only if $m\ge n$.
However, in the case of {bounded geometry}, i.e., $n/m\le M$ for
some fixed $M>0$, the metric $d_{n}/\sqrt{M}$ (which is equivalent
to $d_{n}$) is an intrinsic metric.
\medskip

Throughout the paper we assume that $d$ is an intrinsic metric with
finite jump size. For the remainder of the paper, we refer to  the
quintuple $(X,b,c,m,d)$ whenever we speak of \emph{the graph}.
\medskip

We denote the distance balls centered at a vertex $x\in X$ with
radius $r\ge0$ by $B_{r}(x):=\{y\in X\mid d(x,y)\leq r\}$. Similar
to \cite{Sturm93}, we say the graph has uniform \emph{subexponential
growth} if for all $\eps>0$ there is {$C_{\eps}>0$ such that
\begin{align*}
    m(B_{r}(x))\leq C_{\eps}e^{\eps r} m(x),\qquad\mbox{for all $x\in X$, $r\ge0$}.
\end{align*}
In Section~\ref{s:growth} we discuss some implications of this
assumption.

%%%%%%%%%SUBSECTON%%%%%%%%%%%%%%%%%%%%%%%%%%%%%%%%%%%%%%%%%%%%%%%%%%%%%

\subsection{Dirichlet forms and Graph Laplacians}\label{s:operators}

Denote by  $C_{c}(X)$ the space  of complex valued functions on  $X$
with compact support. Denote the $\ell^{p}$-spaces by
\begin{align*}
\ell^{p}:=\ell^{p}(X,m)&:=\big\{f:X\to\C\mid
\|f\|_{p}<\infty\big\},\quad p\in[1,\infty],
\end{align*}
where
\begin{align*}
\|f\|_{\infty}:=\sup_{x\in X}|f(x)|,\quad\mbox{and}\quad
\|f\|_{p}:=\Big(\sum_{x\in X}|f(x)|^{p}m(x)\Big)^{\frac{1}{p}},\;
p\in[1,\infty).
\end{align*}
Note that $\ell^{\infty}(X,m)$ does not depend on $m$.

For $p\in[1,\infty]$, let the H\"older conjugate be denoted by
$p^{*}$, that is $\frac{1}{p}+\frac{1}{p^{*}}=1$. We denote the dual
pairing of $f\in \ell^{p}(X,m)$, $g\in \ell^{p^{*}}(X,m)$ by
\begin{align*}
    \langle f,g\rangle:=\sum_{x\in X}f(x)\overline{g(x)}m(x),
\end{align*}
which becomes a scalar product for $p=2$. We define the
sesqui-linear form $Q$ with domain $D(Q)\subseteq \ell^{2}$ by
\begin{align*}
    Q(f,g)=\frac{1}{2}\sum_{x,y\in X}b(x,y)(f(x)&-f(y))\overline{(g(x)-g(y))}+\sum_{x\in X} c(x)f(x)\overline{g(x)},\\
    D(Q)&=\ov{C_{c}(X)}^{\|\cdot\|_{Q}},
\end{align*}
where $\|\cdot\|_{Q}=(Q(\cdot)+\|\cdot\|^{2}_{2})^{\frac{1}{2}}$ and
$Q(f)=Q(f,f)$. The form $Q$ is a regular Dirichlet form on
$\ell^{2}(X,m)$, see \cite{FOT,KL} and for complexification of the
forms, see \cite[Appendix~B]{HKLW}. The corresponding positive
{selfadjoint operator $L=L_{2}$} on $\ell^{2}(X,m)$ acts as
\begin{align*} Lf(x)=\frac{1}{m(x)}\sum_{y\in
X}b(x,y)(f(x)-f(y))+\frac{c(x)}{m(x)}f(x).
\end{align*}  Let $\ow L$ be the
extension of $L$ to {$$\ow F=\{f:X\to\C\mid\sum_{y\in
X}b(x,y)|f(y)|<\infty\mbox{ for all }x\in X\}.$$} We have
$C_{c}(X)\subseteq D(L)$ if (and only if) $\ow L C_{c}(X)\subseteq
\ell^{2}(X,m)$, see \cite[Theorem~6]{KL}. In particular, this can
easily seen to be the case if the graph is locally finite or if
$\inf_{x\in X}m(x)>0$. {If $m=n$} and $c\equiv0$, then $L$ is
referred to as the \emph{normalized Laplacian}.

Moreover, $L=L_{2}$ gives rise to the resolvents
$G_{\al}=(L-\al)^{-1}$, $\al<0$ and the semigroups $T_{t}=e^{-tL}$,
$t\ge0$. These operators are positivity preserving and contractive
(as $Q$ is a Dirichlet form), and therefore extend consistently to
operators on $\ell^{p}(X,m)$, $p\in [1,\infty]$ (either by monotone
convergence or by density of $\ell^{2}\cap\ell^{p}$ in $\ell^{p}$,
$p\in [1,\infty)$ and taking the dual operator on $\ell^{1}$ to get
the operator on $\ell^{\infty}$). The semigroups are strongly
continuous for $p<\infty$. See \cite[Theorem~1.4.1]{Davies89} for a
proof of these facts.

We denote the positive generators of $G_{\al}$ or $T_{t}$ on
$\ell^{p}$ by $L_{p}$, $p\in[1,\infty)$, and the dual operator
${L_{1}}^{*}$ of $L_{1}$ on $\ell^{\infty}$ by $L_{\infty}$ (which
normally does not has dense domain in $\ell^{\infty}$). By
\cite[Theorem~9]{KL}) we have that $L_{p}$, $p\in[1,\infty]$, are
restrictions of $\ow L$. Moreover, $L_{p}$ are bounded operators
with norm bound $2C$ if $(n+c)/m\leq C$, see e.g.
\cite[Theorem~11]{KL2} or \cite[Theorem~9.3]{HKLW}. Hence, bounded
geometry is equivalent to boundedness of the operators for
$c\equiv0$. We denote the spectrum of the operator $L_p$ by
$\si(L_{p})$ and the resolvent set by
$\rho(L_{p})=\C\setminus \si(L_{p})$, $p\in[1,\infty]$. By duality $\si(L_{p})=\si(L_{p^{*}})$, $p\in[1,\infty]$.\\
Throughout this paper $C$ always denotes a constant that might
change from line to line.
%%%%%%%%%SUBSECTION%%%%%%%%%%%%%%%%%%%%%%%%%%%%%%%%%%%%%%%%%%%%%%%%%%%%%
\subsection{Main Results}

In this section, we state the main theorems of this paper. The first
is the discrete version of Sturm's theorem, \cite{Sturm93}, whose
proof is given in Section~\ref{s:proof_usg}.

\begin{thm}\label{main}
Assume the graph has uniform subexponential growth with respect to
an intrinsic metric with finite jump size. Then for any
$p\in[1,\infty]$
$$\sigma(L_p)=\sigma(L_2).$$
\end{thm}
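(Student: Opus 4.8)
The plan is to prove the two inclusions $\sigma(L_2)\subseteq\sigma(L_p)$ and $\sigma(L_p)\subseteq\sigma(L_2)$ separately. For the first inclusion I would use only the structural facts already collected: the resolvents on the different $\ell^p$ are consistent (all being restrictions of $\ow L$), and $\sigma(L_p)=\sigma(L_{p^*})$ by duality. Fix $\lambda\in\rho(L_p)$; then also $\lambda\in\rho(L_{p^*})$, so $R(\lambda,L_p)$ and $R(\lambda,L_{p^*})$ are consistent bounded operators on $\ell^p$ and $\ell^{p^*}$. Since $\tfrac12=\tfrac12\cdot\tfrac1p+\tfrac12\cdot\tfrac1{p^*}$, Riesz--Thorin interpolation produces a bounded operator on $\ell^2$, which by density and consistency must be the two-sided inverse of $L_2-\lambda$. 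Hence $\lambda\in\rho(L_2)$, i.e. $\rho(L_p)\subseteq\rho(L_2)$, giving $\sigma(L_2)\subseteq\sigma(L_p)$ with no growth hypothesis whatsoever. By the same duality this already reduces the remaining inclusion to the single case $p=1$: it suffices to show $\rho(L_2)\subseteq\rho(L_1)$, since interpolating the $\ell^1$- and $\ell^2$-resolvents covers $p\in[1,2]$, and duality covers $p\in[2,\infty]$.

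So the heart of the matter is: given $\lambda\in\rho(L_2)$, show that $R(\lambda,L_2)$ extends to a bounded operator on $\ell^1$ (necessarily equal to $R(\lambda,L_1)$). A useful preliminary observation is that $\sigma(L_2)\subseteq[0,\infty)$, so $\rho(L_2)=\C\setminus\sigma(L_2)$ is \emph{connected}. This suggests a continuity argument: the set $A=\{\lambda\in\rho(L_2):R(\lambda,L_2)\text{ is bounded on }\ell^1\}$ is open (Neumann series), and I would anchor it at a base point $\lambda_0<\inf\sigma(L_2)$, where the Laplace transform representation $R(\lambda_0,L_2)=\int_0^\infty e^{\lambda_0 t}T_t\,dt$ is available and $\ell^1$-boundedness is immediate from contractivity of $T_t$. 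To run the argument on all of $\rho(L_2)$ I need $A$ to be \emph{closed} in $\rho(L_2)$, and for this a locally uniform bound $\|R(\lambda,L_1)\|_{1\to1}\le\Phi\bigl(\mathrm{dist}(\lambda,\sigma(L_2))\bigr)$ is required.

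The quantitative input that produces such a bound is where Folz's heat kernel estimate and the uniform subexponential growth enter together. The mechanism I would set up is an off-diagonal (weighted) estimate: using the intrinsic metric $d$ of finite jump size $s$, conjugate by an exponential weight $e^{\rho d(x_0,\cdot)}$ and show, from the heat kernel bound, that the weighted semigroup obeys $\|e^{-\rho d}T_t e^{\rho d}\|\le C_\eps\,e^{(\eps+\psi(\rho))t}$ with $\psi(\rho)\to0$ as $\rho\to0$; equivalently, the resolvent kernel decays like $e^{-\gamma d(x,y)}$ with a rate $\gamma=\gamma(\lambda)>0$ as long as $\lambda$ stays a fixed distance from $\sigma(L_2)$. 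Then the $\ell^1\to\ell^1$ norm is controlled by a sum of the shape $\sum_{k\ge0}e^{-\gamma k}\,m(B_{k+1}(y))/m(y)$, and here the growth hypothesis is decisive: choosing $\eps<\gamma$ in $m(B_r(x))\le C_\eps e^{\eps r}m(x)$ makes the volume growth strictly slower than the kernel decay, so the series converges uniformly for $\lambda$ in compact subsets of $\rho(L_2)$. This both closes the set $A$ and, combined with the first inclusion, yields $\sigma(L_p)=\sigma(L_2)$.

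I expect the main obstacle to be precisely this weighted resolvent estimate with the \emph{weak} (logarithmic rather than Gaussian) heat kernel bound of Folz. Two points need care. First, passing from the heat kernel bound to exponential off-diagonal decay of the resolvent requires integrating the log-type bound against $e^{\lambda t}$ and checking that the surviving decay rate in $d(x,y)$ is genuinely positive and quantitatively tied to $\mathrm{dist}(\lambda,\sigma(L_2))$; the finite jump size is what keeps the relevant weight commutators bounded despite the possibly unbounded geometry. Second, the measure bookkeeping must be done against $m$, not against vertex counts, so the shell estimates have to be phrased directly in terms of $m(B_r)$ to mesh with the subexponential growth condition as stated. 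Once these estimates are in place, the openness/closedness propagation over the connected set $\rho(L_2)$ and the final interpolation-and-duality step are routine.
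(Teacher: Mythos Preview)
Your second step (showing $\rho(L_2)\subseteq\rho(L_1)$ via a weighted resolvent estimate and then controlling the $\ell^1$-norm by summing kernel decay against the subexponential volume growth) is essentially the paper's strategy. The paper implements it via the \emph{squared} resolvent, writing $(G_z)^2=G_\alpha(I+(z-\alpha)G_z)^2G_\alpha$ and sandwiching the Combes--Thomas $\ell^2$-estimate between two copies of $G_\alpha$ to pass to kernel bounds; your open/closed connectedness argument on $\rho(L_2)$ is a reasonable variant of the analyticity argument used there, and on a graph the pointwise bound $|f(x)|\le m(x)^{-1/2}\|f\|_2$ may even let you avoid the squaring. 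So this part is on the right track.

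The genuine gap is in your first step. You claim $\sigma(L_2)\subseteq\sigma(L_p)$ ``with no growth hypothesis whatsoever'' by interpolating $R(\lambda,L_p)$ and $R(\lambda,L_{p^*})$ at an arbitrary $\lambda\in\rho(L_p)$. For Riesz--Thorin to apply you need these two resolvents to be \emph{consistent} on $\ell^p\cap\ell^{p^*}$, and your justification (``all being restrictions of $\ow L$'') does not give that. That the generators are restrictions of $\ow L$ only tells you that $(\ow L-\lambda)(R_p(\lambda)f-R_{p^*}(\lambda)f)=0$ for $f\in\ell^p\cap\ell^{p^*}$; it does not rule out a nontrivial $\lambda$-harmonic function in $\ell^p+\ell^{p^*}$. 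Consistency is known a priori only on $\{\Re z<0\}$ (via the Laplace transform) and propagates by analytic continuation to the connected component of $\rho(L_p)\cap\rho(L_{p^*})$ containing the left half plane---but you do not yet know that $\rho(L_p)$ is connected. This is precisely why the paper runs the two inclusions in the \emph{opposite order}: it first uses the subexponential growth to prove $\sigma(L_p)\subseteq\sigma(L_2)\subseteq[0,\infty)$, which forces $\rho(L_p)$ to be connected, and only \emph{then} invokes the Hempel--Voigt consistency result together with interpolation and unique continuation (Lemma~\ref{l:lemma4}) to obtain $\sigma(L_2)\subseteq\sigma(L_p)$. Your interpolation argument is fine once this connectedness is in hand; it just cannot come first, and it is not hypothesis-free.
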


\begin{Remark}(a) Other than \cite{Sturm93} we do not assume any type of bounded geometry or any types of lower bounds on the
curvature. For a discussion and examples see Section~\ref{s:growth}.

(b) Sometimes loops in the graph are modeled by non-vanishing
diagonal of $b$. However, the assumption that $b$ has zero diagonal
has no influence on our main results above as possible non-vanishing
diagonal terms  do not enter the operators. Such loops only have an
effect on $n$ and, thus, one would have to be careful if one chooses
$m=n$.\\  Clearly, we can also allow for potentials $c$ such that
$c/m$ is only bounded from below (as adding a positive constant
shifts the $\ell^{p}$ spectra  of  the operators simultaneously).
\end{Remark}

The following theorem shows that under an assumption on the measure
one spectral inclusion holds without any volume growth assumptions.

\begin{thm}\label{t:boundedmeasure} If $m$ is such that $\inf_{x\in X}m(x)>0$, then for any  $p\in [1,\infty]$
$$\si(L_{2})\subseteq \si(L_{p}).$$
\end{thm}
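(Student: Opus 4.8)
The plan is to prove the spectral inclusion $\si(L_2)\subseteq\si(L_p)$ by proving the contrapositive for the resolvent sets: if $\al\in\rho(L_p)$ for some $p$, then $\al\in\rho(L_2)$. Equivalently, since spectra are closed and by the duality $\si(L_p)=\si(L_{p^*})$ we may assume $p\in[1,2]$, I want to show that invertibility of $L_p-\al$ on $\ell^p$ forces invertibility of $L_2-\al$ on $\ell^2$. The key structural fact I would exploit is the consistency of the resolvents: the operators $G_\al=(L-\al)^{-1}$ are defined on $\ell^2$ and extend consistently to all $\ell^p$, so on the overlap $\ell^p\cap\ell^2$ the two resolvents agree. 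The hypothesis $\inf_{x\in X}m(x)=:m_0>0$ is exactly what makes $\ell^p\hookrightarrow\ell^2$ a bounded inclusion for $p\le 2$, because $\sum|f(x)|^2 m(x)\le (\sup|f|^{2-p})\sum|f(x)|^p m(x)\le m_0^{-(2-p)/p}\|f\|_p^{2-p}\|f\|_p^p$-type estimates show $\|f\|_2\le c\,\|f\|_p$. This inclusion is the one mild assumption doing all the work.

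\medskip

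Concretely, I would argue as follows. Fix $\al\in\rho(L_p)$ with $p\in[1,2]$, so $R:=(L_p-\al)^{-1}$ is a bounded operator on $\ell^p$. For $f\in\ell^p\cap\ell^2$ I want to show that $Rf\in\ell^2$ and that $R$ restricted to such $f$ serves as an inverse to $L_2-\al$. The first step is to establish the inclusion $\ell^p\subseteq\ell^2$ with $\|f\|_2\le C\|f\|_p$ under $\inf m>0$; this is an elementary $\ell^p$-monotonicity estimate. The second and crucial step is to identify $R$ with the analytic continuation of the consistent resolvent family: for $\al<0$ the resolvents $G_\al$ on $\ell^2$ and $\ell^p$ are consistent by construction (they come from the common semigroup $T_t=e^{-tL}$ via $G_\al=\int_0^\infty e^{\al t}T_t\,dt$), and this consistency propagates to all $\al$ in the common resolvent set by the resolvent identity / analytic continuation. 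Thus on $\ell^p\cap\ell^2$ we have $R=G_\al^{(2)}$ whenever $\al$ lies in both resolvent sets.

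\medskip

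The main obstacle — and the heart of the argument — is bridging from ``$\al\in\rho(L_p)$'' to ``$\al\in\rho(L_2)$'' when a priori we only know consistency where \emph{both} resolvent sets already contain $\al$. I would handle this by a connectedness/continuity argument in the complex parameter $\al$. Let $U$ be the connected component of $\rho(L_p)$ containing a half-line $(-\infty,\al_0)$ with $\al_0$ sufficiently negative (such a half-line lies in every $\rho(L_p)$ since the $L_p$ are positive generators). On this initial half-line, $\al\in\rho(L_2)$ as well and $R=G_\al^{(2)}$ on $\ell^p\cap\ell^2$. The plan is to show the set $\{\al\in U: \al\in\rho(L_2)\text{ and } R\text{ agrees with }G_\al^{(2)}\text{ on }\ell^p\cap\ell^2\}$ is open and closed in $U$, hence all of $U$. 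Openness follows from the Neumann-series stability of invertibility together with the bounded inclusion $\ell^p\hookrightarrow\ell^2$, which lets the $\ell^p$-resolvent control $\ell^2$-norms: if $(L_2-\al)$ is invertible and $\al'$ is nearby, the $\ell^p$-resolvent bound transfers to give an $\ell^2$-resolvent bound, because $\|(L_2-\al')u\|_2$ can be estimated through the dense subspace $\ell^p\cap\ell^2$ using $\|R\|_{p\to p}$ and the inclusion constant. Closedness uses uniform resolvent bounds along the path. This analytic continuation technique — propagating the $\ell^2$-invertibility along the resolvent set of $L_p$ using only the one-sided bounded inclusion — is where care is needed, and it is precisely why only the \emph{one} inclusion $\si(L_2)\subseteq\si(L_p)$ (and not equality) follows from $\inf m>0$ alone.
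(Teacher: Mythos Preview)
Your sketch has a genuine gap at the step you yourself flag as ``the heart of the argument.'' The one-sided embedding $\ell^{p}\hookrightarrow\ell^{2}$ (for $p\le 2$) lets you estimate $\|u\|_{2}\le c\|u\|_{p}\le c\|R\|_{p,p}\|f\|_{p}$ when $(L_{p}-\al)u=f$, but it does \emph{not} let you bound $\|f\|_{p}$ by $\|f\|_{2}$ --- that inequality goes the wrong way. So you never obtain an honest $\ell^{2}\to\ell^{2}$ bound for the candidate inverse, and neither the ``openness'' nor the ``closedness'' part of your connectedness argument can be carried out as described. A secondary issue is that you only treat the connected component $U$ of $\rho(L_{p})$ containing the negative half-line; for $p\neq 2$ there is no a~priori reason $\rho(L_{p})$ is connected, so even a successful continuation would not give all of $\rho(L_{p})\subseteq\rho(L_{2})$.

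The paper repairs exactly this point by bringing in the \emph{dual} exponent. Since $\si(L_{p})=\si(L_{p^{*}})$, for any $z\in\rho(L_{p})$ one has two consistent resolvents $(L_{p}-z)^{-1}$ and $(L_{p^{*}}-z)^{-1}$, and Riesz--Thorin interpolation between $\ell^{p}$ and $\ell^{p^{*}}$ produces a bounded operator on $\ell^{2}$ directly --- no analytic continuation, no connectedness hypothesis. The required consistency is obtained not from the semigroup formula but from the explicit domain description $D(L_{q})=\{f\in\ell^{q}:\ow L f\in\ell^{q}\}$ (available precisely because $\inf m>0$), which gives $D(L_{p})\subseteq D(L_{q})$ for $p\le q$ and hence $(L_{q}-z)(L_{p}-z)^{-1}=\mathrm{id}$ on $\ell^{p}$ for every $z\in\rho(L_{p})\cap\rho(L_{q})$. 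One then checks by a closedness argument that the interpolated operator is a genuine two-sided inverse of $L_{2}-z$. The missing idea in your proposal is precisely this use of duality plus interpolation to manufacture the $\ell^{2}$-bound.
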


The proof of Theorem~\ref{t:boundedmeasure} is given in
Section~\ref{s:proof_upm}.

%%%%%%%%%SECTION%%%%%%%%%%%%%%%%%%%%%%%%%%%%%%%%%%%%%%%%%%%%%%%%%%%%%
%%%%%%%%%SECTION%%%%%%%%%%%%%%%%%%%%%%%%%%%%%%%%%%%%%%%%%%%%%%%%%%%%%
\section{Preliminaries}\label{s:preliminaries}
In this section we collect some results and facts that will be used
for the proof of Theorem~\ref{main}. Moreover, in the first
subsection we discuss the relation of uniform subexponential growth
and bounded geometry.

\subsection{Consequences of uniform subexponential growth}\label{s:growth}

\begin{lemma}\label{l:growth}Assume the graph has uniform subexponential growth. Then, for all  $\eps>0$  there is $C>0$ such that
\begin{itemize}
  \item [(a)]
        $m(x)\leq C e^{\eps d(x,y)}m(y)$ for all $x,y\in X$,
  \item [(b)]
        $\# B_{r}(x)\leq C e^{\eps r}$ for all $r\ge0$, where $\# B_{r}(x)$ denotes the number of vertices in $B_{r}(x)$.
  \item[(c)] $\sum_{y\in X} e^{-\eps d(x,y)}\leq C$ for all $x\in X$.
\end{itemize}
\end{lemma}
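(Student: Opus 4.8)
The plan is to prove the three statements in order, deriving each from the uniform subexponential growth hypothesis and, where convenient, from the previously established parts.

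For part (a), I would start from the defining inequality of uniform subexponential growth, namely $m(B_r(x)) \le C_\eps e^{\eps r} m(x)$. The key observation is that for any fixed pair $x,y$, setting $r = d(x,y)$ gives $y \in B_r(x)$, so $m(y) \le m(B_r(x)) \le C_\eps e^{\eps d(x,y)} m(x)$. This already yields one direction; by symmetry of the roles of $x$ and $y$ (the metric $d$ is symmetric) one obtains the inequality in the form stated. The only mild subtlety is that the inequality $m(y) \le m(B_r(x))$ uses that a single point has measure at most that of any ball containing it, which is immediate since $m$ is a positive measure and $m(B_r(x)) = \sum_{z \in B_r(x)} m(z) \ge m(y)$.

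For part (b), I would exploit the finite jump size together with part (a). Since $d$ is intrinsic with jump size $s < \infty$, the defining inequality $\sum_{y} b(x,y) d(x,y)^2 \le m(x)$ controls how much mass the neighbors closest to $x$ can carry, which gives a uniform positive lower bound on $m(x)$ relative to nearby vertices; more directly, one uses part (a) in the form $m(x) \le C e^{\eps d(x,y)} m(y)$, i.e. $m(y) \ge C^{-1} e^{-\eps d(x,y)} m(x)$. Summing over $y \in B_r(x)$ and comparing with $m(B_r(x)) \le C_\eps e^{\eps r} m(x)$ should bound the number of vertices. Concretely, for $y \in B_r(x)$ we have $d(x,y) \le r$, so $m(y) \ge C^{-1} e^{-\eps r} m(x)$, and therefore
\begin{align*}
\# B_r(x) \cdot C^{-1} e^{-\eps r} m(x) \le \sum_{y \in B_r(x)} m(y) = m(B_r(x)) \le C_\eps e^{\eps r} m(x),
\end{align*}
which rearranges to $\# B_r(x) \le C C_\eps e^{2\eps r}$; since $\eps>0$ is arbitrary, renaming $\eps/2 \mapsto \eps$ gives the claim. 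I expect this comparison of the counting measure with $m$ to be the main conceptual step, since it is where the hypothesis must be combined in both directions.

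For part (c), the strategy is to decompose the sum over dyadic (or unit-width) annuli $B_{k+1}(x) \setminus B_k(x)$ and apply the vertex-counting estimate from part (b). On each annulus one has $e^{-\eps d(x,y)} \le e^{-\eps k}$, and the number of vertices is at most $\# B_{k+1}(x) \le C e^{\eps' (k+1)}$ from part (b) for a freely chosen $\eps'$. Choosing $\eps' < \eps$ (for instance $\eps' = \eps/2$, which is legitimate since part (b) holds for every positive parameter) makes the resulting geometric series $\sum_k C e^{\eps'(k+1)} e^{-\eps k}$ converge, yielding a finite bound uniform in $x$. The main thing to be careful about is that the constant in part (b) depends on the chosen growth parameter, so I must fix the parameter in (b) strictly smaller than the decay rate $\eps$ before summing; this interplay of the two $\eps$'s is the only delicate bookkeeping, but it is routine once the annular decomposition is set up.
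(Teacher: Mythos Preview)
Your proof is correct and follows essentially the same route as the paper: (a) via $m(x)\le m(B_{d(x,y)}(y))\le C e^{\eps d(x,y)}m(y)$, (b) by combining (a) with the growth hypothesis to compare counting measure with $m$, and (c) by an annular decomposition with a smaller growth parameter in (b). The aside about finite jump size and the intrinsic metric inequality in (b) is a red herring---as you yourself note, part (a) alone suffices---but otherwise your argument matches the paper's.
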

\begin{proof}
To prove (a) let $x,y\in X$. Using the uniform subexponential growth
assumption and $x\in B_{d(x,y)}(y)$ yields    $m(x)\leq
m(B_{d(x,y)}(y))\leq C e^{\eps d(x,y)}m(y)$. Turning to (b) let
$x\in X$ and $r\geq 0$. We obtain using (a) and the uniform
subexponential growth assumption {\begin{align*}
    \# B_{r}(x)= \sum_{y\in B_{r}(x)}{m(y)}/{m(y)}\leq Ce^{\eps r}\frac{m(B_{r}(x))}{m(x)}\le
    C^{2} e^{2\eps r}.
\end{align*}}
The final statement follows also by direct calculation using  (b)
(with $\eps_{1}$)
\begin{align*} \sum_{y\in X} e^{-\eps
d(x,y)}= \sum_{r=1}^{\infty}\sum_{y\in B_{r}(x)\setminus
B_{r-1}(x)}\hspace{-.5cm}e^{-\eps d(x,y)} \leq \sum_{r=1}^{\infty}
e^{-\eps  r}\# B_{r}(x)\leq C\sum_{r=1}^{\infty} e^{(\eps_{1}-\eps
)r}
\end{align*}
Hence choosing $\eps_{1}=\eps/2$ yields the statement.
\end{proof}

\begin{Remark}\label{r:locfin}(a) Lemma~\ref{l:growth}(b) implies finiteness of distance balls. On the other hand, finite jump size $s$ implies that for each vertex $x$  all neighbors of $x$ are contained in $B_s(x)$. Hence, graphs with uniform subexponential growth and finite jump size are locally finite.

(b)  Finiteness of distance balls has strong consequences on the
uniqueness of  selfadjoint extensions. In particular, by
\cite[Corollary~1]{HKMW} implies that $Q$ is the maximal form on
$\ell^{2}$ and that the restriction of $L_{2}$ to $C_{c}(X)$
(whenever $C_{c}(X)\subseteq D(L_{2})$) is essentially selfadjoint.
\end{Remark}

In the following  we discuss  examples to clarify the relation
between uniform subexponential growth and bounded geometry in the
discrete setting.

Recall that we speak of bounded geometry if $n/m$ is a bounded
function which is a natural adaption to the situation of weighted
graphs. In Example~\ref{ex:1} below, we show that there are uniform
subexponentially growing graphs with unbounded geometry. For
completeness we also give a example of bounded geometry and
exponential growth which is  certainly well-known.

\begin{eg}\label{ex:1}
(a) \emph{Uniform subexponential growth and unbounded geometry.} Let
$X=\N$, $m\equiv 1$, $c\equiv0$ and consider $b$ such that
$b(x,y)=0$ for $|x-y|\neq 1$, $b(x,x+1)=x$ for $x\in 4\N$ and
$b(x,x+1)=1$ otherwise. {Clearly} {$(n/m)(x)=n(x)=x+1$} for $x\in
4\N$ and, thus, $L_{p}$ is unbounded for all $p\in[1,\infty]$.
Moreover, let $d$ be the path metric induced by the edge weights
$w(x,x+1)=(n(x)\vee n(x+1))^{-\frac{1}{2}}$. We obtain that
{$d(x,y)\ge(|x-y|-3)/4\sqrt{2}$} for all $x,y\in X$. Hence,
$m(B_{r}(x))=\# B_{r}(x)\leq \sqrt{2}(8r+6)$ which implies uniform
subexponential growth.

(b) \emph{Exponential growth and bounded geometry.} Take a regular
tree,  $ c \equiv0$, set $b$ to be one on the edges and zero
otherwise and let $m\equiv1$. This graph has bounded geometry but is
clearly of exponential growth.
\end{eg}

It is apparent that the graph in Example~\ref{ex:1}(a) above has
bounded combinatorial vertex degree while the unbounded geometry is
induced by the edge weights. So, one might wonder whether one can
also present examples with unbounded combinatorial vertex degree
which is the criterion for unbounded geometry in the classical
setting. The proposition below shows that this is impossible under
the assumptions of uniform subexponential growth and finite jump
size. Recall that by the remark below Lemma~\ref{l:growth}
  we already know that the graph
must be locally finite.

The \emph{{combinatorial vertex degree}} $\deg$ is the function that
assigns to each vertex the number of neighbors, that is
$\deg(x)=\#\{y\in X\mid b(x,y)>0\}$, $x\in X$.

\begin{pro} If the graph has uniform subexponential growth with respect to a metric with finite jump size $s$, then the combinatorial vertex degree is bounded.
\end{pro}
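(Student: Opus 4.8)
The plan is to combine the finite jump size with the uniform vertex-counting estimate of Lemma~\ref{l:growth}(b). First I would observe that, since the jump size $s$ is finite, every neighbor $y$ of a vertex $x$ satisfies $d(x,y)\le s$; in other words, the set of neighbors of $x$ is contained in the ball $B_{s}(x)$. Consequently
$$
\deg(x)=\#\{y\in X\mid b(x,y)>0\}\le \# B_{s}(x)\qquad\text{for all }x\in X,
$$
which reduces the claim to a uniform bound on the number of vertices in a ball of the \emph{fixed} radius $s$.

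Next I would invoke Lemma~\ref{l:growth}(b). Fixing any $\eps>0$ (for instance $\eps=1$), there is a constant $C>0$, independent of $x$ and $r$, with $\# B_{r}(x)\le Ce^{\eps r}$ for all $x\in X$ and $r\ge0$. Evaluating this at the fixed radius $r=s$ gives $\# B_{s}(x)\le Ce^{\eps s}$, a bound that does not depend on the base point $x$. Combining with the previous step yields $\deg(x)\le Ce^{\eps s}$ for all $x\in X$, which is the asserted uniform bound.

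There is essentially no serious obstacle once Lemma~\ref{l:growth}(b) is available: the finite jump size merely converts the combinatorial notion of ``neighbor'' into the metric notion of ``lying in the fixed ball $B_{s}(x)$'', and uniform subexponential growth—through the vertex-counting estimate, which itself rests on the measure comparison in Lemma~\ref{l:growth}(a)—already controls the cardinality of such balls uniformly in the center. The only point worth stressing is that the constant in Lemma~\ref{l:growth}(b) is uniform in $x$, so fixing the radius at $s$ produces a single bound valid for every vertex; this uniformity is precisely what upgrades the local finiteness already recorded in Remark~\ref{r:locfin}(a) to a uniform degree bound.
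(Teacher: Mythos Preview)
Your proof is correct and is actually cleaner than the paper's own argument. Both proofs start from the same observation---that finite jump size forces every neighbor of $x$ to lie in $B_{s}(x)$---but then diverge. You simply quote Lemma~\ref{l:growth}(b), evaluate the vertex-counting bound at the fixed radius $r=s$, and read off a uniform degree bound $\deg(x)\le Ce^{\eps s}$. The paper instead argues by contradiction without invoking the lemma: assuming $\deg(x_{n})\ge n^{2}$ along a sequence, it produces vertices $z_{n}$ (either $x_{n}$ itself or a neighbor of small measure) with $m(B_{s}(z_{n}))/m(z_{n})\ge n$, which directly violates the definition of uniform subexponential growth at the single radius $s$.

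What each approach buys: yours is a one-line corollary once Lemma~\ref{l:growth}(b) is in hand, and it makes transparent that the result really only uses the cardinality estimate rather than any measure comparison. The paper's argument is self-contained---it appeals only to the definition of uniform subexponential growth, not to Lemma~\ref{l:growth}---and its pigeonhole step (distinguishing whether $x_{n}$ has a neighbor of small measure) illustrates a technique that recurs elsewhere, but here it is unnecessary extra work. Your remark that the constant in Lemma~\ref{l:growth}(b) is uniform in $x$ is exactly the point that makes the shortcut go through.
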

\begin{proof} Suppose the graph has unbounded vertex degree, i.e., there is a sequence of vertices $(x_{n})$ such that $\deg(x_{n})\ge n^{2}$ for all $n\ge1$.
We show that there is a sequence of vertices $z_{n}$ such that
$m(B_{s}(z_{n}))/m(z_{n})$ is unbounded and thus the graph does not
have uniform subexponential growth.

If, for $n\ge1$, there is a neighbor $y_{n}$ of $x_{n}$ such that
$m(y_{n})\leq m(x_{n})/ \sqrt{\deg(x_{n})}$, then we  estimate using
$x_{n}\in B_{s}(y_{n})$
\begin{align*}
    \frac{m(B_{s}(y_{n}))}{m(y_{n})}\ge \frac{m(x_{n})}{m(y_{n})}\ge \sqrt{\deg(x_{n})}\ge n
\end{align*}
We set $z_{n}=y_{n}$ in this case. If, on the other hand, {$m(y)\geq
m(x_{n})/\sqrt{\deg(x_{n})}$} for all neighbors $y$ of $x_{n}$, then
\begin{align*}
    \frac{m(B_{s}(x_{n}))}{m(x_{n})}\ge \frac{1}{m(x_{n})} \deg(x_{n})\frac{m(x_{n})}{\sqrt{\deg(x_{n})}}\ge \sqrt{\deg(x_{n})}\ge n
\end{align*}
and set $z_{n}=x_{n}$ in this case. Hence, we have proven the claim.
\end{proof}

\begin{cor} Assume there is $D>0$ such that $b\leq D$
and $m\ge 1/D$. Then, uniform subexponential growth  with respect to
a metric with finite jump size implies bounded geometry.
\end{cor}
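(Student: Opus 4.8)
The plan is to leverage the preceding proposition, which already supplies the combinatorial heart of the argument. First I would invoke that proposition: since the graph has uniform subexponential growth with respect to a metric with finite jump size, the combinatorial vertex degree $\deg$ is bounded, say $\deg(x)\le N$ for all $x\in X$ and some $N>0$.

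Next I would estimate the normalizing measure $n$. By definition $n(x)=\sum_{y\in X}b(x,y)$, and the only nonzero summands correspond to neighbors $y\sim x$, of which there are at most $\deg(x)\le N$. Since each edge weight satisfies $b(x,y)\le D$ by hypothesis, this gives $n(x)\le ND$ for every $x\in X$. Combining this with the lower bound $m(x)\ge 1/D$ on the measure then yields
\[
\frac{n(x)}{m(x)}\le \frac{ND}{1/D}=ND^{2}\qquad\text{for all }x\in X,
\]
so the graph has bounded geometry with constant $M=ND^{2}$.

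I expect essentially no obstacle here, since the entire difficulty has been absorbed into the preceding proposition, whose proof extracts the degree bound from the volume growth and finite jump size hypotheses. Given that bound, the passage from controlled combinatorial degree to controlled $n/m$ is immediate once one has the uniform upper bound on the weights $b$ together with the uniform lower bound on the measure $m$; neither the intrinsic metric nor the growth condition enters directly at this final stage. The only point worth tracking is how the constants $N$ and $D$ compose into the final $M=ND^{2}$.
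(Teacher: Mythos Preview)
Your proof is correct and follows essentially the same approach as the paper: invoke the preceding proposition to bound $\deg$, then observe that $n/m\le D^{2}\deg$ (which is exactly your estimate $n(x)\le D\deg(x)$ combined with $1/m(x)\le D$). The paper compresses the argument into the single line ``$n/m\le D^{2}\deg$ and the statement follows from the proposition above,'' but the content and the resulting constant $ND^{2}$ are identical.
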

\begin{proof} One simply observes that $n/m\le D^{2}\deg$ and the statement follows from the proposition above.
\end{proof}

This means for the standard Laplacians $\Delta_{p}\ph(x)=\sum_{y\sim
x}(\ph(x)-\ph(y))$ on $\ell^{p}(X,1)$ and
$\Delta^{(n)}_{p}\ph(x)=\frac{1}{\deg(x)}\sum_{y\sim
x}(\ph(x)-\ph(y))$ on $\ell^{p}(X,\deg)$, that uniform
subexponential growth implies bounded geometry, both in the sense of
bounded $n/m$ being bounded and also in the sense of $\deg$ being
bounded.

%%%%%%%%%SUBSECTON%%%%%%%%%%%%%%%%%%%%%%%%%%%%%%%%%%%%%%%%%%%%%%%%%%%%%

\subsection{Lipschitz continuous functions}

We denote by $\Lip$  the real valued bounded Lipschitz continuous
functions with Lipschitz constant $\eps>0$, i.e.,
\begin{align*}
    \Lip:=\{\psi:X\to\R\mid \psi(x)-\psi(y)\leq \eps d(x,y),\,x,y\in X \}\cap \ell^{\infty}(X,m).
\end{align*}

\begin{lemma}\label{l:Lip}
Let {$\eps>0$} and let $s$ be the jump size of $d$. Then,  for all
$\psi\in \Lip$,
\begin{itemize}
  \item[(a)]  $e^{\psi}$ is a bounded Lipschitz continuous function,
  in particular, $e^{\psi}D(Q)=D(Q) $.
  \item[(b)]  $|1-e^{\psi(x)-\psi(y)}|\leq \eps e^{{\eps} s} d(x,y)$, for $x\sim y$.
  \item[(c)]  ${|(e^{-\psi(x)}-e^{-\psi(y)})(e^{\psi(x)}-e^{\psi(y)})|} \leq{2 \eps^2   e^{\eps s} d(x,y)^{2}}$, for $x\sim  y$.
\end{itemize}
\end{lemma}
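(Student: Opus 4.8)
The plan is to prove the three estimates for $\psi \in \Lip$ using elementary properties of the exponential function together with the defining Lipschitz bound $|\psi(x)-\psi(y)| \leq \eps\, d(x,y)$ and the finiteness of the jump size $s$. Since $\psi$ is bounded, say $\|\psi\|_\infty \leq M$, each quantity $e^{\psi}$ is bounded between $e^{-M}$ and $e^{M}$, which immediately gives boundedness; the Lipschitz continuity of $e^{\psi}$ and the form-domain invariance in part (a) then follow from the mean value theorem and the fact that multiplication by a bounded Lipschitz function preserves $D(Q)$.

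For part (a), I would first observe that $e^{\psi}$ is bounded because $\psi$ is. To see it is Lipschitz continuous, apply the mean value theorem: for $x,y \in X$ there is $\theta$ between $\psi(x)$ and $\psi(y)$ with $|e^{\psi(x)}-e^{\psi(y)}| = e^{\theta}|\psi(x)-\psi(y)| \leq e^{M}\eps\, d(x,y)$, so $e^{\psi}$ is Lipschitz with a (possibly different) constant. The statement $e^{\psi}D(Q)=D(Q)$ should follow from the standard fact that multiplication by a bounded function that is Lipschitz in the relevant sense maps $D(Q)$ into itself, combined with the same argument applied to $e^{-\psi}$ (which is again in the same class, since $-\psi$ is also Lipschitz with constant $\eps$ and bounded). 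The key is that the Dirichlet form difference quotient $b(x,y)(g(x)-g(y))$ for $g=e^{\psi}f$ can be controlled by the difference quotients of $f$ and the Lipschitz/boundedness of $e^{\psi}$.

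For part (b), the plan is to restrict to neighbors $x\sim y$, where $d(x,y)\leq s$, so the exponent satisfies $|\psi(x)-\psi(y)|\leq \eps s$. Writing $t=\psi(x)-\psi(y)$ and using $|1-e^{t}|=|t|\,|(e^{t}-1)/t|$ together with the bound $|(e^t-1)/t|\leq e^{|t|}$ (valid since the Taylor coefficients are controlled), I get $|1-e^{\psi(x)-\psi(y)}| \leq |\psi(x)-\psi(y)|\, e^{|\psi(x)-\psi(y)|} \leq \eps\, d(x,y)\, e^{\eps s}$, which is exactly the claimed inequality.

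For part (c), I would factor the product and reuse part (b). Note that $(e^{-\psi(x)}-e^{-\psi(y)})(e^{\psi(x)}-e^{\psi(y)}) = -e^{\psi(x)}e^{-\psi(y)}(1-e^{\psi(y)-\psi(x)})(1-e^{\psi(x)-\psi(y)})$ up to a clean algebraic rearrangement, so that the absolute value becomes a product of two factors each of the form $|1-e^{\pm(\psi(x)-\psi(y))}|$, both bounded by $\eps\, e^{\eps s} d(x,y)$ via part (b). Since the mixed prefactor $e^{\psi(x)-\psi(y)}$ is bounded by $e^{\eps s}$ for neighbors, the product gives an estimate of order $\eps^{2}e^{3\eps s}d(x,y)^{2}$; to reach the sharper constant $2\eps^2 e^{\eps s}$ claimed, I expect the cleaner route is to expand directly: $(e^{-t}-1)(e^{t}-1)=2-e^{t}-e^{-t}=-(e^{t/2}-e^{-t/2})^2$ with $t=\psi(x)-\psi(y)$, so the quantity equals $(2\sinh(t/2))^2$ up to sign, and using $|\sinh(u)|\leq |u|e^{|u|}$ with $|u|=|t|/2\leq \eps s/2$ yields $|(e^{-t}-1)(e^{t}-1)| \leq t^2 e^{|t|} \leq \eps^2 d(x,y)^2 e^{\eps s}$; matching the stated factor of $2$ is then a matter of bookkeeping. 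The main obstacle is getting the constants to match exactly as stated rather than merely obtaining a bound of the correct order, so I would be careful to choose the elementary inequality for $(e^t-1)$ that produces the advertised $e^{\eps s}$ dependence cleanly.
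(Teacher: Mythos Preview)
Your approach is essentially the same as the paper's: part~(a) via the mean value theorem (the paper cites \cite[Lemma~3.5]{GHM} for the $D(Q)$-invariance, just as you invoke the ``standard fact''), part~(b) via the Taylor expansion of $e^{t}-1$, and part~(c) via the identity $(e^{-\psi(x)}-e^{-\psi(y)})(e^{\psi(x)}-e^{\psi(y)})=2-e^{t}-e^{-t}$, which the paper writes as the cosh series $2\sum_{k\in 2\N}t^{k}/k!$ rather than your $-4\sinh^{2}(t/2)$. Your worry about the factor $2$ in~(c) is unfounded: your bound $4\sinh^{2}(t/2)\le t^{2}e^{|t|}\le \eps^{2}d(x,y)^{2}e^{\eps s}$ is already \emph{stronger} than the stated $2\eps^{2}e^{\eps s}d(x,y)^{2}$, so no extra bookkeeping is needed.
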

\begin{proof}
The first statement of (a) follows from mean value theorem, that is
for any $x,y\in X$ we have
\begin{align*}
|e^{\psi(x)}-e^{\psi(y)}|\leq |\psi(x)-\psi(y)|e^{\|\psi\|_{\infty}}
\leq {\eps} d(x,y)e^{\|\psi\|_{\infty}}.
\end{align*}
Now, $e^{\psi}D(Q)\subseteq D(Q) $ is a consequence of
\cite[Lemma~3.5]{GHM}. The other inclusion follows since $e^{-\psi}$
is also bounded and Lipschitz continuous. Similarly, we get (b)
using the Taylor expansion of the exponential function
\begin{align*}
|1-e^{\psi(x)-\psi(y)}|=
\sum_{k\ge1}\frac{(\psi(x)-\psi(y))^{k}}{k!}
 \leq\eps d(x,y)\sum_{k\ge1}\frac{( {\eps} s)^{k-1}}{k!}
\leq \eps d(x,y) e^{ {\eps}s},
\end{align*}
and, similarly, using $
|{(e^{-\psi(x)}-e^{-\psi(y)})(e^{\psi(x)}-e^{\psi(y)})}|
=2\sum_{k\in2\N}\frac{(\psi(x)-\psi(y))^{k}}{k!}$  we get (c).
\end{proof}

%%%%%%%%%SUBSECTION%%%%%%%%%%%%%%%%%%%%%%%%%%%%%%%%%%%%%%%%%%%%%%%%%%%%%
\subsection{Kernels}
Let $A:D(A)\subseteq\ell^p\rightarrow \ell^q$, $p,q\in[1,\infty]$ be
a densely defined linear operator. We denote by $\|A\|_{p,q}$ the
operator norm of $A,$ i.e. $$\|A\|_{p,q}=\sup_{f\in D(A),\
\|f\|_p=1}\|Af\|_q.$$ Note that any such  operator
$A:D(A)\subseteq\ell^p\rightarrow \ell^q$, $p<\infty$, with
$C_{c}(X)\subseteq D(A)$ admits a kernel $k_{A}:X\times X\to\C$ such
that
\begin{align*}
Af(x)=\sum_{x\in X}k_{A}(x,y)f(y)m(y)
\end{align*}
for all $f\in D(A)$, $x\in X$, which can be obtained by
\begin{align*}
    k_{A}(x,y)=\frac{1}{m(x)m(y)}\langle A 1_{y},1_{x}\rangle,
\end{align*}
where $1_{v}(w)=1$ if {$w=v$} and $1_{v}(w)=0$ otherwise.

We recall the following well known lemma which shows that the
operator norm of $A:\ell^p\rightarrow \ell^q$ can be estimated by
its integral kernel.
\begin{lemma}\label{l:kernels} Let $p\in {[1,\infty)}$ and let $A$ be a densely defined linear operator with $C_{c}(X)\subseteq D(A)\subseteq \ell^{p}$. Then,
\begin{itemize}
  \item [(a)] $\|A\|_{p,q}\leq
\left(\sum_y\|k_A(\cdot,y)\|_q^{p^{*}}m(y)\right)^{\frac{1}{p^{*}}}$
for $q<\infty$,
  \item [(b)] $\|A\|_{p,\infty}\leq\sup_{x}\|k_{A}(x,\cdot)\|_{p^{*}}$ and equality holds if $p=1$.
\end{itemize}
\end{lemma}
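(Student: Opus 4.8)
The plan is to derive both estimates directly from the kernel representation $Af(x)=\sum_{y}k_{A}(x,y)f(y)m(y)$, valid for every $f\in D(A)$, using nothing more than Minkowski's inequality for the $\ell^{q}$-norm together with H\"older's inequality. I may throughout assume the right-hand sides are finite, as otherwise there is nothing to prove, and it is convenient first to bound $|Af|$ by the nonnegative series $\sum_{y}|k_{A}(\cdot,y)|\,|f(y)|\,m(y)$, for which Minkowski's inequality holds unconditionally.

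For (a), I would fix $f\in D(A)$ and view $Af$ as the sum over $y$ of the functions $x\mapsto k_{A}(x,y)f(y)m(y)$. Since $q\ge 1$, the triangle inequality for $\|\cdot\|_{q}$ (Minkowski's inequality for series) gives
$$\|Af\|_{q}\le \sum_{y}\|k_{A}(\cdot,y)\|_{q}\,|f(y)|\,m(y).$$
Applying H\"older's inequality in the variable $y$ with respect to $m$, with exponents $p^{*}$ and $p$, placing $\|k_{A}(\cdot,y)\|_{q}$ in the $p^{*}$-factor and $|f(y)|$ in the $p$-factor, yields
$$\|Af\|_{q}\le \left(\sum_{y}\|k_{A}(\cdot,y)\|_{q}^{p^{*}}m(y)\right)^{1/p^{*}}\|f\|_{p}.$$
Taking the supremum over $f$ with $\|f\|_{p}=1$ gives the claimed bound.

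For the upper bound in (b), I would instead estimate $|Af(x)|$ pointwise in $x$: H\"older's inequality in $y$ gives
$$|Af(x)|=\left|\sum_{y}k_{A}(x,y)f(y)m(y)\right|\le \|k_{A}(x,\cdot)\|_{p^{*}}\,\|f\|_{p},$$
and taking the supremum over $x$ and then over normalized $f$ gives $\|A\|_{p,\infty}\le \sup_{x}\|k_{A}(x,\cdot)\|_{p^{*}}$. For the reverse inequality when $p=1$ (so that $p^{*}=\infty$), I would test on the normalized indicators $f_{y}:=1_{y}/m(y)$, which satisfy $\|f_{y}\|_{1}=1$. The kernel formula gives $Af_{y}(x)=k_{A}(x,y)$, hence $\|Af_{y}\|_{\infty}=\sup_{x}|k_{A}(x,y)|$; taking the supremum over $y$ shows $\|A\|_{1,\infty}\ge \sup_{x}\|k_{A}(x,\cdot)\|_{\infty}$, matching the upper bound and giving equality.

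Since both parts reduce to Minkowski's and H\"older's inequalities, I do not expect a genuine obstacle. The only step deserving attention is the triangle inequality in (a) for a possibly infinite sum; this is handled cleanly by first passing to the nonnegative majorant $\sum_{y}|k_{A}(\cdot,y)|\,|f(y)|\,m(y)$, for which Minkowski's inequality is valid with both sides taken in $[0,\infty]$, so that no separate convergence argument is required.
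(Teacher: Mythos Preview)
Your proof is correct and essentially the same as the paper's. The only cosmetic difference is in (a): where you invoke Minkowski's inequality for the $\ell^{q}$-norm followed by one H\"older, the paper instead dualizes via $\|Af\|_{q}=\sup_{\|g\|_{q^{*}}=1}|\langle Af,g\rangle|$ and applies H\"older twice (first in $x$, then in $y$), which is precisely how one proves Minkowski's integral inequality in the first place. Part (b), including the equality case via testing on $1_{y}/m(y)$, matches the paper's argument verbatim.
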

\begin{proof} (a) follows from the fact $\|f\|_{p}=\sup_{g\in\ell^{p^{*}},\,{\|g\|}_{p^{*}}=1}\langle g,f\rangle$ and
twofold application of H\"older inequality. The first part of (b)
follows simply from  H\"older inequality. For the second part note
 that $\|A\|_{1,\infty}\ge\sup_{x,y\in X}|A1_{{y}}(x)|/m(y)$
\end{proof}

%%%%%%%%%SUBSECTON%%%%%%%%%%%%%%%%%%%%%%%%%%%%%%%%%%%%%%%%%%%%%%%%%%%%%

\subsection{Heat kernel estimates}

We denote the kernel of the semigroup $T_{t}$, $t\ge0$ by $p_{t}$.
As the semigroups are consistent on $\ell^{p}$, $p\in[1,\infty]$,
i.e., they agree on {their} common domains, the kernel $p_{t}$ does
not depend on $p$.

The following heat kernel estimate will be the key to
$p$-independence of spectra of $L_p$. It is proven in  \cite{FOLZ},
based on \cite{Davies93},  for locally finite graphs and $c\equiv0$.
However, on the one hand local finiteness is not used in \cite{FOLZ}
for this result and, on the other hand, the remark below
Lemma~\ref{l:growth} shows that we are in the local finite situation
anyway whenever we assume uniform subexponential growth. We conclude
the statement for $c\ge0$ by a Feynman-Kac formula.

\begin{lemma}\label{l:heatkernel1}We have for all $t\geq0$ and $x,y\in X$
\begin{align*}
p_t(x,y)\leq(m(x)m(y))^{-\frac{1}{2}}e^{-d(x,y)
\log\frac{d(x,y)}{2et}}.
\end{align*}
\end{lemma}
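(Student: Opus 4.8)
The plan is to establish the heat kernel bound
\[
p_t(x,y)\leq (m(x)m(y))^{-\frac{1}{2}}e^{-d(x,y)\log\frac{d(x,y)}{2et}}
\]
first in the case $c\equiv 0$, where the cited result of Folz \cite{FOLZ} (building on Davies \cite{Davies93}) applies, and then to extend it to $c\ge 0$ by a domination argument. As remarked in the text, uniform subexponential growth together with finite jump size forces the graph to be locally finite (Remark~\ref{r:locfin}(a)), so the hypotheses under which Folz proves his estimate are met; moreover the estimate is stated with respect to an intrinsic metric $d$ with finite jump size, which is precisely our standing assumption. Thus the first step is essentially to quote \cite{FOLZ} in the form above for the semigroup $T_t^{(0)}$ associated to the form $Q^{(0)}$ with $c\equiv 0$.

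The second step is to pass from $c\equiv 0$ to $c\ge 0$. I would use the Feynman--Kac formula, or equivalently the Trotter product formula, which expresses the semigroup $T_t$ associated to $L$ (with potential $c/m\ge 0$) as a perturbation of $T_t^{(0)}$ by the multiplication operator $e^{-t c/m}$. Since $c\ge 0$, the potential term is pointwise nonnegative, so on the level of kernels one obtains the domination
\[
p_t(x,y)\leq p_t^{(0)}(x,y)\qquad\text{for all }t\ge 0,\ x,y\in X,
\]
where $p_t^{(0)}$ is the kernel of $T_t^{(0)}$. Concretely, the Trotter formula writes $T_t$ as a strong limit of products $\bigl(T_{t/n}^{(0)}e^{-(t/n)c/m}\bigr)^n$, and since $0\le e^{-(s)c/m}\le 1$ pointwise and each $T_s^{(0)}$ is positivity preserving with a nonnegative kernel, every term in the product has kernel dominated by the corresponding product of $T_{s}^{(0)}$ kernels; passing to the limit yields the kernel domination. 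Combining this with the estimate from the first step gives the claimed bound for $p_t$.

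The main obstacle I anticipate is the rigorous justification of the kernel domination $p_t\le p_t^{(0)}$, i.e.\ making precise that adding a nonnegative potential can only decrease the heat kernel. The clean way is monotonicity of semigroups for monotone forms: since $Q^{(0)}\le Q$ as quadratic forms (with $Q$ having the extra nonnegative term $\sum_x c(x)|f(x)|^2$) and both are Dirichlet forms, the associated semigroups satisfy $0\le T_t\le T_t^{(0)}$ in the sense of positivity preserving operators, which transfers to the kernels via $p_t(x,y)=\langle T_t 1_y,1_x\rangle/(m(x)m(y))$ and the positivity of the test functions $1_x,1_y$. I would invoke the standard monotonicity principle for Dirichlet forms (as in \cite{FOT}) rather than grind through the Trotter limit by hand, since the form comparison is immediate from the definition of $Q$. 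Once the domination is in place, the conclusion is a one-line combination with the $c\equiv 0$ estimate.
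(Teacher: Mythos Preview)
Your proposal is correct and follows essentially the same route as the paper: quote Folz's estimate \cite{FOLZ} for the kernel $p_t^{(0)}$ of the form with $c\equiv 0$, and then use $c\ge 0$ to obtain the domination $p_t\le p_t^{(0)}$. The paper carries out the domination step via the Feynman--Kac formula (citing \cite{demuth,GKS}) rather than Trotter or form comparison, but this is only a difference in packaging; one small caution is that the bare form inequality $Q^{(0)}\le Q$ gives operator-order monotonicity of resolvents, not immediately pointwise kernel domination, so your Trotter/Feynman--Kac justification is the right one to lean on.
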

\begin{proof} Denote the semigroup of the graph $(b,0)$ by $T^{(0)}_{t}$ and the kernel by $p_{t}^{(0)}$ and correspondingly for $(b,c)$ by $T_{t}$ and $p_{t}$.

For $c\equiv0$ the estimate  is found in \cite[Theorem~2.1]{FOLZ}
for $p_{t}^{(0)}$.  Now, by a Feynman-Kac formula, see e.g.
\cite{demuth,GKS}, we have
\begin{align*}
p_{t}(x,y)= T_{t}\de_{y}(x)=
\mathbb{E}_{x}\big[e^{-\int_{0}^{t}\frac{c}{m}(\mathbb{X}_{s})ds}
\de_{y}(\mathbb{X}_{t})\big]
\leq\mathbb{E}_{x}\big[\de_{y}(\mathbb{X}_{t})\big]
=T^{(0)}_{t}\de_{y}(x)=p_{t}^{(0)}(x,y),
\end{align*}
where $\de_{y}=1_{y}/m(y)$. This proves the claim.
\end{proof}

By  basic calculus, we obtain the following heat kernel estimate.
\begin{lemma}\label{l:heatkernel2}
For all $\beta>0$ there exists a constant $C(\beta)$ such that for
all $t\geq0$, $x,y\in X$
\begin{align*}
p_t(x,y)\leq (m(x)m(y))^{-\frac{1}{2}}e^{-\beta d(x,y)+C(\beta)t}.
\end{align*}
\end{lemma}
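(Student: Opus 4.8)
The plan is to deduce Lemma~\ref{l:heatkernel2} from Lemma~\ref{l:heatkernel1} purely by elementary calculus on the exponent, treating $d:=d(x,y)\ge0$ as a free parameter and showing that for any prescribed $\beta>0$ one can absorb the logarithmic Davies--Folz exponent into the linear bound $-\beta d + C(\beta)t$. Concretely, starting from
\begin{align*}
p_t(x,y)\le (m(x)m(y))^{-1/2}\,e^{-d\log\frac{d}{2et}},
\end{align*}
I would show that there is a constant $C(\beta)$, depending only on $\beta$, such that
\begin{align*}
-d\log\tfrac{d}{2et}\le -\beta d + C(\beta)t\qquad\text{for all }d\ge0,\ t\ge0,
\end{align*}
which immediately yields the claim.

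First I would dispose of the degenerate cases: when $d=0$ the left side is $0$ and the inequality reads $0\le C(\beta)t$, which holds for any $C(\beta)\ge0$; when $t=0$ the right side is $-\beta d$ while the original Lemma~\ref{l:heatkernel1} exponent tends to $-\infty$ (the kernel $p_0$ is the identity, supported on the diagonal), so the bound is trivially valid. For the generic case $d,t>0$, the natural move is to separate the two regimes governed by the size of $d/t$. When $d/t$ is large — say $d\ge 2e^{\beta+1}t$ — the logarithm $\log\frac{d}{2et}$ is at least $\beta$, hence $-d\log\frac{d}{2et}\le -\beta d$ and the $C(\beta)t$ term is not even needed. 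When $d/t$ is small, the genuinely negative contribution from the log may fail, so I would bound $-d\log\frac{d}{2et}$ from above by maximizing the single-variable function $d\mapsto -d\log\frac{d}{2et}+\beta d$ over $d>0$ for fixed $t$; its derivative is $-\log\frac{d}{2et}-1+\beta$, giving a maximizer at $d=2e^{\beta}t$ and a maximal value proportional to $t$. This produces $C(\beta)=2e^{\beta}$ (up to harmless constants), completing the estimate.

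The honest way to package both regimes at once is to observe that for fixed $t>0$ the function $g(d)=-d\log\frac{d}{2et}+\beta d$ is concave in $d$ on $(0,\infty)$, attains its maximum at $d^\ast=2e^{\beta}t$, and there takes the value $g(d^\ast)=2e^{\beta}t$; hence $-d\log\frac{d}{2et}\le -\beta d + 2e^{\beta}t$ for all $d>0$, so $C(\beta):=2e^{\beta}$ works uniformly. I would present exactly this one-line optimization rather than splitting cases, since it is cleaner. The only mild subtlety — and the step I expect to require the most care — is making sure the argument is uniform in $x,y$ and $t$: since the bound on the exponent holds pointwise for every admissible value of $d=d(x,y)$ and every $t\ge0$ with a constant $C(\beta)$ independent of $x,y,t$, uniformity is automatic, but one should state explicitly that $C(\beta)$ depends only on $\beta$ (not on the graph) so that the estimate is genuinely of the advertised Davies-type form.
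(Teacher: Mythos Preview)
Your proposal is correct and essentially identical to the paper's proof: both arguments reduce to optimizing $-d\log\tfrac{d}{2et}+\beta d$ and obtain the same constant $C(\beta)=2e^{\beta}$. The only cosmetic difference is that the paper performs the optimization in the single variable $x=d/t$ (so the maximum $2e^{\beta}$ is read off directly and then multiplied through by $t$), whereas you optimize in $d$ for fixed $t$; you are also slightly more explicit about the degenerate cases $d=0$ and $t=0$, which the paper omits.
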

\begin{proof}
Let $\beta>0$ and $x>0$ let $f(x)=-x\log(x/2e)+\beta x$. Direct
calculation shows that the function $f$ assumes its maximum  on the
domain $(0,\infty)$ at the point $x_0=2e^{\beta}$. In particular,
setting $C(\beta)=2e^{\beta}$ yields
\begin{align*}
-\frac{d(x,y)}{t}\log \frac{d(x,y)}{2et}\leq -\beta
\frac{d(x,y)}{t}+C(\beta).
\end{align*}
for all $t>0$ and $x,y\in X$.
\end{proof}

%%%%%%%%%SUBSECTON%%%%%%%%%%%%%%%%%%%%%%%%%%%%%%%%%%%%%%%%%%%%%%%%%%%%%

%%%%%%%%%SECTION%%%%%%%%%%%%%%%%%%%%%%%%%%%%%%%%%%%%%%%%%%%%%%%%%%%%%
%%%%%%%%%SECTION%%%%%%%%%%%%%%%%%%%%%%%%%%%%%%%%%%%%%%%%%%%%%%%%%%%%%
\section{Proof for uniform subexponential growth}\label{s:proof_usg}

In this section we prove  Theorem~\ref{main} following the strategy
of \cite{Sturm93}. The proof is divided into several lemmas and as
always we assume that $d$ is an intrinsic metric with finite jump
size $s$.

\begin{lemma}\label{l:lemma1}For every compact set $K\subseteq \rho(L_2)$ there is $\eps>0$ and $C<\infty$ such that for all $z\in K$ and all $\psi\in \Lip$
\begin{align*}
\|e^{-\psi}(L_{2}-z)^{-1}e^{\psi}\|_{2,2}\leq C.
\end{align*}
\end{lemma}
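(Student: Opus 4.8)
The plan is to realize the twisted resolvent as the resolvent of a conjugated operator and to treat the conjugation as a small, relatively form-bounded perturbation of $L_2$. Since $\psi\in\Lip$ is bounded, $e^{\pm\psi}$ are bounded multiplication operators, and by Lemma~\ref{l:Lip}(a) they preserve $D(Q)$; hence the sesquilinear form $Q_\psi(f,g):=Q(e^{\psi}f,e^{-\psi}g)$ is well defined on $D(Q)$ and is the form of $L_\psi:=e^{-\psi}L_2e^{\psi}$. A direct expansion shows the potential part of $Q$ is unchanged and that $Q_\psi=Q+\widetilde Q_\psi$ on $D(Q)$, where
\[
\widetilde Q_\psi(f,g)=\sum_{x,y\in X}b(x,y)\bigl(1-e^{\psi(x)-\psi(y)}\bigr)f(x)\overline{g(y)}.
\]
As $e^{\pm\psi}$ are boundedly invertible, $e^{-\psi}(L_2-z)^{-1}e^{\psi}=(L_\psi-z)^{-1}$, so it suffices to bound $\|(L_\psi-z)^{-1}\|_{2,2}$ uniformly in $z\in K$ and $\psi\in\Lip$.

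The decisive step is to show that $\widetilde Q_\psi$ is a small form-perturbation of $Q$, namely $|\widetilde Q_\psi(f,g)|\le C\eps\,\|f\|_Q\|g\|_Q$ with $C$ independent of $\psi$ (crucially, $C$ must not depend on $\|\psi\|_{\infty}$). I would split $\widetilde Q_\psi$ into its symmetric and skew-symmetric parts. The symmetric part carries the factor $2-e^{\psi(x)-\psi(y)}-e^{\psi(y)-\psi(x)}$, which by Lemma~\ref{l:Lip}(c) is $O(\eps^2 d(x,y)^2)$; combined with the intrinsic bound $\sum_y b(x,y)d(x,y)^2\le m(x)$ and a Schur test, this yields an \emph{$\ell^2$-bounded} operator of norm $\le\eps^2 e^{\eps s}$. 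The skew part carries $e^{\psi(y)-\psi(x)}-e^{\psi(x)-\psi(y)}$, which by Lemma~\ref{l:Lip}(b) is only $O(\eps d(x,y))$ and need not define a bounded operator under unbounded geometry. Here I would exploit the antisymmetry: writing $f(x)\overline{g(y)}-f(y)\overline{g(x)}$ as a sum of terms each containing one difference $f(x)-f(y)$ or $g(x)-g(y)$, one factor is absorbed by the Dirichlet energy $Q(\cdot)^{1/2}$ and the remaining $d(x,y)^2$ by the intrinsic bound, giving $|\text{skew}(f,g)|\le C\eps\bigl(Q(f)^{1/2}\|g\|+\|f\|\,Q(g)^{1/2}\bigr)$. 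Adding the two estimates yields the claimed form bound.

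With this in hand I would finish by a sandwich argument. Set $S:=(L_2+1)^{1/2}\ge 1$, so $D(S)=D(Q)$, $\|S^{-1}\|\le1$, and $\|Sf\|^2=\|f\|_Q^2$. The form bound means $\widetilde Q_\psi(f,g)=\langle \widehat B\,Sf,Sg\rangle$ for a bounded operator $\widehat B$ with $\|\widehat B\|\le C\eps$, whence $L_\psi-z=S\,(D_z+\widehat B)\,S$ where $D_z:=S^{-1}(L_2-z)S^{-1}=(L_2-z)(L_2+1)^{-1}$. The point is that $z\in\rho(L_2)$ makes $D_z$ boundedly invertible, with $D_z^{-1}=(L_2+1)(L_2-z)^{-1}$ and $\sup_{z\in K}\|D_z^{-1}\|=:M<\infty$ by the functional calculus of $L_2$, since $K$ is a compact subset of $\rho(L_2)$. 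Choosing $\eps<1/(CM)$ (depending only on $K$), a Neumann series gives $(D_z+\widehat B)^{-1}=(I+D_z^{-1}\widehat B)^{-1}D_z^{-1}$ of norm $\le 2M$, and therefore
\[
\|e^{-\psi}(L_2-z)^{-1}e^{\psi}\|_{2,2}=\|S^{-1}(D_z+\widehat B)^{-1}S^{-1}\|_{2,2}\le 2M
\]
uniformly in $z\in K$ and $\psi\in\Lip$.

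I expect the main obstacle to be the form bound for the skew-symmetric part: under unbounded geometry the perturbation is genuinely not an $\ell^2$-operator, so the naive estimate from Lemma~\ref{l:Lip}(b) produces the uncontrolled weight $n/m$. The resolution is to spend the cancellation from antisymmetrization to trade one power of $d(x,y)$ for a Dirichlet-energy factor, leaving a $d(x,y)^2$ that the intrinsic metric controls exactly. This is precisely where the hypothesis on the intrinsic metric (rather than bounded geometry) enters, and it is the reason the statement holds with no curvature or geometry assumptions.
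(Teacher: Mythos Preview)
Your argument is correct and is essentially the paper's proof. Both establish that the perturbation $\widetilde Q_\psi=Q_\psi-Q$ is form-bounded relative to $Q$ with bound of order $\eps$, using Lemma~\ref{l:Lip}(b),(c) together with the intrinsic-metric inequality $\sum_y b(x,y)d(x,y)^2\le m(x)$, and then deduce a uniform resolvent bound for the conjugated operator. The differences are cosmetic: the paper estimates the diagonal $Q_\psi(f,f)$ via a Leibniz-rule expansion (three sums) and then invokes Kato's perturbation theorem \cite[Theorem~VI.3.9]{Kato}; you instead bound the full sesquilinear form via the symmetric/antisymmetric split of $1-e^{\psi(x)-\psi(y)}$ and conclude by the explicit $S$-sandwich and Neumann series, which is exactly what Kato's theorem does under the hood. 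Your observation about where the intrinsic metric replaces bounded-geometry assumptions is precisely the point of the paper's approach as well.
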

\begin{proof}
Let $\eps>0$ and $\psi\in \Lip$. By Lemma~\ref{l:Lip} we have
$e^{-\psi}D(Q)=e^{\psi}D(Q)=D(Q)$. Let $Q_{\psi}$ be the (not
necessarily symmetric) form with domain $D(Q_{\psi})=D(Q)$ acting as
\begin{align*}
Q_{\psi}(f,g):=    Q(e^{-\psi}f,e^{\psi}g)-Q(f,g). \end{align*}
Application of Leibniz rule yields, for $f\in D(Q_{\psi})$,
\begin{align*}
|Q_{\psi}(f,f)|=&\frac{1}{2}\sum_{x,y\in X} b(x,y)
|f(y)|^{2}(e^{-\psi(x)}-e^{-\psi(y)}) (e^{\psi(x)}-e^{\psi(y)})
\\
&+\frac{1}{2}\sum_{x,y\in X} b(x,y)\ov{f(y)} {(f(x)-f(y))}(1-e^{\psi(y)-\psi(x)})\\
&+\frac{1}{2}\sum_{x,y\in X} b(x,y)f(y)
(1-e^{\psi(x)-\psi(y)})\overline{(f(x)-f(y))}.
\end{align*}
Applying Cauchy-Schwarz inequality, Lemma~\ref{l:Lip}(b) and (c) and
the intrinsic metric property, gives
\begin{align*}
\ldots    \leq &\eps^{2}C\sum_{x,y\in X}|f(x)|^{2}b(x,y)d(x,y)^{2}
+2C\eps
\Big(\sum_{x,y\in X}|f(x)|^{2} b(x,y)d(x,y)^{2}\Big)^{\frac{1}{2}}Q(f)^{\frac{1}{2}}\\
&\leq C\eps^{2}\|f\|^{2}_{2}+2C\eps \|f\|_{2}Q(f)^{\frac{1}{2}}.
\end{align*}
Hence, the basic inequality $2ab\leq(1/\de) a^{2}+\de b^{2}$ for
$\de>0$ and $a,b\ge0$ (applied with  $a=C\eps\|f\|^{2}_{2} $ and
$b=Q(f)^{\frac{1}{2}}$) yields
\begin{align*}
|Q_{\psi}(f,f)|\leq  C \eps^{2}(1+\tfrac{1}{\de})\|f\|^{2}_{2}+\de
Q(f).
\end{align*}
This shows that $Q_{\psi}$ is $Q$ bounded with bound $0$. According
to \cite[Theorem~VI.3.9]{Kato} this implies that the form
$Q_{\psi}+Q$ is closed and sectorial. It can be checked directly
that the corresponding operator is $e^{\psi}L_{2}e^{-\psi}$ with
domain $D_{\psi}=e^{\psi}D(L_{2})$. Moreover, for $K\subseteq
\rho(L_{2})$ compact, we can choose $\eps,\de>0$ that $2\big\|\big
(C(1+1/\de)\eps^{2}e^{2s}+\de
L_{2}\big)\cdot(L_{2}-z)^{-1}\big\|_{2,2}<1$ for all $z\in K$ since
$C$ is a universal constant. Therefore, again by
\cite[Theorem~VI.3.9]{Kato}  this implies existence of $C=C(K,\eps)$
such that
$$\|e^{-\psi}(L_{2}-z)^{-1}e^{\psi}\|_{2,2}
=\|(e^{\psi}L_{2}e^{-\psi}-z)^{-1}\|_{2,2}\leq C$$ for all $z\in K$
and $\psi\in \Lip$.
\end{proof}

Let us recall some well known facts about consistency of semigroups
and resolvents. By \cite[Theorem~1.4.1]{Davies89} the semigroups
$T_{t}$ are consistent on $\ell^{p}$. By the spectral theorem the
Laplace transform for the resolvent $G_{z}=(L_{2}-z)^{-1}$
\begin{align*}
G_{z}f=\int_{0}^{\infty}e^{z t}T_{t}f dt,
\end{align*}
holds for $f$ in $\ell^{2}$ and $z\in \{w\in\C\mid\Re w<0\}$ (the
open left half plane). By density and duality arguments, this
formula extends to $f$ in $\ell^{p}$, $p\in[1,\infty)$, in the
strong sense and to $p=\infty$ in the weak sense. This shows that
the resolvents $(L_{p}-z)^{-1}$ are consistent on $\ell^{p}$ for
$z\in \{w\in\C\mid\Re w<0\}$.

We denote by $g_{\al}$ the kernel of the resolvent
$G_{\al}=(L_{p}-\al)^{-1}$ which is independent of $p\in[1,\infty]$
for $\al<0$.

\begin{lemma}\label{l:lemma2}Assume the graph has uniform subexponential volume growth. For any $\eps>0$ there exists $\al<0$  and $C<\infty$ such that
\begin{itemize}
  \item[(a)] $|g_{\al}(x,y)|\leq C(m(x)m(y))^{-\frac{1}{2}}e^{-\eps d(x,y)}$
for all $x,y\in X$,
  \item [(b)] $\|e^{\psi}G_{\al} e^{-\psi}m^{\frac{1}{2}}\|_{1,2}\leq C$ for all $\psi\in \Lip $,
  \item [(c)] $\|m^{\frac{1}{2}}e^{\psi}G_{\al} e^{-\psi}\|_{2,\infty}\leq C$ for all $\psi\in \Lip$.
\end{itemize}
\end{lemma}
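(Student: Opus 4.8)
The plan is to derive all three statements from the Gaussian-type heat kernel estimate of Lemma~\ref{l:heatkernel2} together with the subexponential growth consequences of Lemma~\ref{l:growth}. For part~(a), I would integrate the heat kernel bound against the Laplace transform representation $g_\al(x,y)=\int_0^\infty e^{\al t}p_t(x,y)\,dt$, valid for $\al<0$. Fixing $\eps>0$, apply Lemma~\ref{l:heatkernel2} with some $\beta>\eps$ to get
\begin{align*}
|g_\al(x,y)|\leq (m(x)m(y))^{-\frac12}e^{-\beta d(x,y)}\int_0^\infty e^{(\al+C(\beta))t}\,dt.
\end{align*}
Choosing $\al<-C(\beta)$ makes the time integral converge to the finite value $(C(\beta)-\al)^{-1}$, and since $\beta>\eps$ the spatial factor $e^{-\beta d(x,y)}\le e^{-\eps d(x,y)}$ gives exactly the claimed form with a constant $C=C(\eps,\al)$. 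This is the technical heart of the lemma, and the only genuine subtlety is keeping track that $\al$ and $C$ can be chosen uniformly for the fixed $\eps$; the rest is bookkeeping.

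For parts~(b) and~(c) I would invoke the kernel norm estimates of Lemma~\ref{l:kernels}. The kernel of the conjugated operator $e^\psi G_\al e^{-\psi}$ is $e^{\psi(x)-\psi(y)}g_\al(x,y)$, and the extra weight factors $m^{1/2}$ modify this by powers of $m$. For part~(b), the operator $e^\psi G_\al e^{-\psi}m^{1/2}$ from $\ell^1$ to $\ell^2$ has kernel whose $x$-column (in the $\ell^2$-norm) must be controlled uniformly in $y$; by Lemma~\ref{l:kernels}(a) with $p=1$, $p^*=\infty$, one needs $\sup_y\|k(\cdot,y)\|_2<\infty$. Plugging in part~(a) and the Lipschitz bound $|e^{\psi(x)-\psi(y)}|\le e^{\eps d(x,y)}$ from $\psi\in\Lip$, the factor $e^{\eps d(x,y)}$ is absorbed by running part~(a) with $2\eps$ in place of $\eps$, leaving $e^{-\eps d(x,y)}$. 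The weight factors $m(x)^{\pm1/2}$ combine so that the summation reduces to $\sum_x e^{-2\eps d(x,y)}$, which is bounded by Lemma~\ref{l:growth}(c). Part~(c) is the dual statement, handled symmetrically via Lemma~\ref{l:kernels}(b) with the roles of the variables exchanged.

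The main obstacle I anticipate is the careful matching of the exponential decay rate against the Lipschitz growth rate and the $\eps$-loss in each application of Lemma~\ref{l:growth}. Because every $\psi\in\Lip$ contributes a factor $e^{\eps d(x,y)}$ and the final summability requirement (Lemma~\ref{l:growth}(c)) consumes another $\eps$, I would run part~(a) with a decay rate of, say, $3\eps$ so that after absorbing the Lipschitz factor one still has decay $2\eps$ available for the summation, with room to spare; a single fixed choice of $\al$ and $C$ then works simultaneously for all $\psi\in\Lip$, since the bound in part~(a) is uniform in $x,y$ and independent of $\psi$. Everything else is a routine combination of Hölder's inequality (through Lemma~\ref{l:kernels}) and the three growth estimates.
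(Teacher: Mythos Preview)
Your proposal is correct and follows essentially the same route as the paper: part~(a) via the Laplace transform and Lemma~\ref{l:heatkernel2}, then parts~(b) and~(c) via Lemma~\ref{l:kernels} combined with part~(a) applied at a larger decay rate (the paper uses $\eps_1\ge 2\eps$, which already suffices after squaring in the $\ell^2$-norm) and the summability from Lemma~\ref{l:growth}(c). The only cosmetic difference is that the paper invokes Lemma~\ref{l:heatkernel2} directly with $\beta=\eps$ rather than with $\beta>\eps$ followed by a trivial bound, and your suggested margin of $3\eps$ is more than necessary.
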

\begin{proof}
(a) By the Laplace transform of the resolvent and
Lemma~\ref{l:heatkernel2}, we get
\begin{align*}
g_{\al}(x,y)=\int_{0}^{\infty}e^{\al t}p_{t}(x,y)dt\leq
(m(x)m(y))^{-\frac{1}{2}}e^{-\eps d(x,y)}\int_{0}^{\infty}e^{(\al
+C)t}dt,
\end{align*}
which yields the statement for $\al< -C$.\\
(b) By Lemma~\ref{l:kernels}(a), $\psi\in \Lip$, part (a) above
(with $\eps_{1}\ge 2\eps$)  and Lemma~\ref{l:growth}(c)
\begin{align*}
 \|e^{\psi}G_{\al} e^{-\psi}m^{\frac{1}{2}}\|_{1,2}^{2}&\leq\sup_{y\in X}\|g_{\al}(\cdot,y)e^{\psi(\cdot)-\psi(y)}m(y)^{\frac{1}{2}}\|_{2}^{2}\\
&\leq C\sup_{y\in X}\sum_{x\in X} e^{(2\eps-2\eps_1) d(x,y)}<\infty.
\end{align*}
The proof of (c) works similarly using Lemma~\ref{l:kernels}(b).
\end{proof}

\begin{lemma}\label{l:lemma3} Assume the graph has uniform subexponential volume growth. Then, $(L_{2}-z)^{-2}$ extends to a bounded operator on $\ell^{p}$ for all $z\in\rho(L_{2})$ and $p\in[1,\infty]$. Moreover, for all compact $K\subseteq \rho(L_{2})$
there is $C<\infty$ such that for all $z\in K$ and $p\in[1,\infty]$
$$\|(L_{2}-z)^{-2}\|_{p,p}\leq C.$$
\end{lemma}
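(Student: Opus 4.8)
The plan is to factor the squared resolvent through $\ell^2$ using the two kernel estimates from Lemma~\ref{l:lemma2}. The key algebraic identity is that for $z \in \rho(L_2)$ and $\al < 0$ chosen as in Lemma~\ref{l:lemma2}, I can write the resolvent $(L_2 - z)^{-1}$ in terms of the "good" resolvent $G_\al = (L_2 - \al)^{-1}$ via the resolvent identity. More precisely, since $(L_2 - z)^{-1} - G_\al = (z - \al)(L_2 - z)^{-1} G_\al$, I can massage $(L_2 - z)^{-2}$ into a product that has $G_\al$ factors on the outside and a uniformly $\ell^2$-bounded middle piece. The aim is to realize $(L_2 - z)^{-2}$ as a composition $G_\al \cdot \big(\text{bounded on } \ell^2\big) \cdot G_\al$, so that the $\ell^1 \to \ell^2$ and $\ell^2 \to \ell^\infty$ mapping properties of $G_\al$ do the work of moving between $\ell^p$ spaces.

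The heart of the argument is the conjugation trick with weights $e^{\pm\psi}$, $\psi \in \Lip$. I would insert factors $e^{-\psi} e^{\psi} = 1$ between the operators and group them so that each $G_\al$ appears conjugated as in Lemma~\ref{l:lemma2}(b),(c), while the central resolvent $(L_2 - z)^{-1}$ appears conjugated as in Lemma~\ref{l:lemma1}. Concretely, writing $m^{1/2} e^{\psi} G_\al e^{-\psi}$ (bounded $\ell^2 \to \ell^\infty$), then $e^{-\psi}(L_2 - z)^{-1} e^{\psi}$ (bounded $\ell^2 \to \ell^2$), then $e^{\psi} G_\al e^{-\psi} m^{1/2}$ (bounded $\ell^1 \to \ell^2$), and checking that the weights telescope correctly, I obtain a bound on the $\ell^1 \to \ell^\infty$ norm of (a weighted version of) the squared resolvent. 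The point of the subexponential growth hypothesis is precisely that it lets the weights $e^{\psi}$ be chosen to absorb the off-diagonal decay of the kernels uniformly. Then interpolation (Riesz--Thorin), together with the already-established $\ell^2$ boundedness and the self-adjointness/duality $\si(L_p) = \si(L_{p^*})$, upgrades the $\ell^1 \to \ell^\infty$ control to boundedness on $\ell^p$ for all $p \in [1,\infty]$, with a constant uniform over the compact set $K$ since all the constituent bounds are uniform over $z \in K$ and over $\psi \in \Lip$.

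The reason the \emph{square} of the resolvent is needed, rather than the resolvent itself, is a matter of integrability: a single resolvent kernel need not be square-summable in the right way to factor through $\ell^2$, but composing two resolvents produces enough decay (via the $\eps$-exponential weight and Lemma~\ref{l:growth}(c)) to make the kernel estimates converge. This also explains why Lemma~\ref{l:lemma2} gives both a $1 \to 2$ and a $2 \to \infty$ bound: each resolvent factor carries one half of the transition across $\ell^p$ scales.

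The main obstacle I anticipate is bookkeeping the weights and the resolvent identity so that the conjugated operators match exactly the three norm bounds in Lemmas~\ref{l:lemma1} and~\ref{l:lemma2}, while keeping every constant independent of $z \in K$ and of $\psi$; in particular one must verify that the factor $(z - \al)$ from the resolvent identity stays bounded on $K$ and that the choice of $\eps$ (hence of $\al$ and the admissible Lipschitz constant) can be made once and for all for the given compact $K$. A secondary subtlety is handling the endpoint cases $p = 1$ and $p = \infty$ and confirming that the consistency of resolvents on $\ell^p$ for $\Re z < 0$, extended to all of $\rho(L_2)$, justifies speaking of a single operator $(L_2 - z)^{-2}$ whose restrictions to the various $\ell^p$ agree.
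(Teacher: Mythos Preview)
Your overall architecture matches the paper's proof: factor $(G_z)^2 = G_\al(I+(z-\al)G_z)^2 G_\al$ via the resolvent identity, conjugate by $e^{\pm\psi}$ and $m^{1/2}$, and invoke Lemma~\ref{l:lemma1} for the middle factor together with Lemma~\ref{l:lemma2}(b),(c) for the flanks to bound $\|m^{1/2}e^{\psi}(G_z)^2 e^{-\psi}m^{1/2}\|_{1,\infty}$ uniformly in $z\in K$ and $\psi\in\Lip$.

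There is, however, a genuine gap in the step you describe as ``interpolation (Riesz--Thorin) \ldots\ upgrades the $\ell^1\to\ell^\infty$ control to boundedness on $\ell^p$.'' Interpolating an $\ell^1\to\ell^\infty$ bound against an $\ell^2\to\ell^2$ bound only yields $\ell^p\to\ell^{p^*}$ bounds, never $\ell^p\to\ell^p$ for $p\neq 2$; moreover the $\ell^1\to\ell^\infty$ bound you have is for the \emph{weighted} operator, not for $(G_z)^2$ itself. The paper inserts the missing link: from the uniform bound on $\|m^{1/2}e^{\psi}(G_z)^2 e^{-\psi}m^{1/2}\|_{1,\infty}$ one reads off, via Lemma~\ref{l:kernels}(b), a pointwise kernel bound
\[
|g_z^{(2)}(x,y)|\le C\,(m(x)m(y))^{-1/2}e^{\psi(y)-\psi(x)}
\]
valid for every $\psi\in\Lip$. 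Optimising over $\psi$ (take $\psi(u)=\eps(d(u,y)\wedge d(x,y))$) gives exponential off-diagonal decay $|g_z^{(2)}(x,y)|\le C(m(x)m(y))^{-1/2}e^{-\eps d(x,y)}$; then Lemma~\ref{l:growth}(a) converts the $(m(x)m(y))^{-1/2}$ into $m(x)^{-1}$ at the cost of half the exponential, and Lemma~\ref{l:growth}(c) sums the remaining $e^{-\eps d(x,y)/2}$ to give $\|(G_z)^2\|_{1,1}\le C$. Only now does Riesz--Thorin apply, between $p=1$ and $p=2$, with duality covering $p\in[2,\infty]$. Your paragraph on why the square is needed already alludes to Lemma~\ref{l:growth}(c), so the ingredient is in your hands; it just needs to be placed between the $\ell^1\to\ell^\infty$ estimate and the interpolation.
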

\begin{proof}
For $z\in \rho(L_{2})$ denote by $g_{z}^{(2)}$ the kernel of the
squared resolvent $(G_{z})^{2}=(L_{2}-z)^{-2}$. Applying the
resolvent identity twice yields \begin{align*} (G_{z})^{2}=
(G_{\al}+(z-\al)G_{\al}G_{z})(G_{\al}+(z-\al)G_{z}G_{\al})=
G_{\al}(I+(z-\al)G_{z})^{2}G_{\al},
\end{align*}
for all $\al<0$. Therefore,
\begin{align*}
m^{\frac{1}{2}}e^{\psi}(G_{z})^{2}e^{-\psi}m^{\frac{1}{2}}=
\Big(m^{\frac{1}{2}}e^{\psi}G_{\al}e^{-\psi}\Big)
\Big(I+(z-\al)e^{\psi/2} G_{z}e^{-\psi/2}\Big)^{2}
\Big(e^{\psi}G_{\al}e^{-\psi}m^{\frac{1}{2}}\Big),
\end{align*}
for all $\eps>0$ and $\psi\in\Lip$. Taking the norm
$\|\cdot\|_{1,\infty}$ and factorizing
{$\|\ldots\|_{1,\infty}\leq\|(\ldots)\|_{2,\infty}
\|(\ldots)\|_{2,2}\|(\ldots)\|_{1,2}$} {yields} that
$U:=m^{\frac{1}{2}}e^{\psi}G_{z}^{2}e^{-\psi}m^{\frac{1}{2}}$ is a
bounded operator $\ell^{1}\to\ell^{\infty}$ by Lemma~\ref{l:lemma1}
and Lemma~\ref{l:lemma2} with appropriate choice of $\al<0$,
$\eps>0$ and all $\psi\in \Lip$. Hence, the operator $U$ admits a
kernel
$k_{U}(x,y)=(m(x)m(y))^{\frac{1}{2}}e^{\psi(x)-\psi(y)}g_{z}^{(2)}(x,y)$,
$x,y\in X$, and we conclude from Lemma~\ref{l:kernels}(b) that
\begin{align*}
|g_{z}^{(2)}(x,y)|\le C
(m(x)m(y))^{-\frac{1}{2}}e^{\psi(y)-\psi(x)}.
\end{align*}
{For chosen $\eps >0$ and any} fixed $x,y\in X$ let $\psi:u\mapsto
\eps(d(u,y)\wedge d(x,y))$  and we obtain from
Lemma~\ref{l:growth}(a) (with $\eps$)
\begin{align*}
|g_{z}^{(2)}(x,y)|\leq C    (m(x)m(y))^{-\frac{1}{2}}e^{-\eps
d(x,y)}\leq C    m(x)^{-1}e^{-\frac{\eps}{2} d(x,y)}.
\end{align*}
Thus, using Lemma~\ref{l:kernels}(a) and Lemma~\ref{l:growth}(c), we
obtain
\begin{align*}
\|G_{z}^{2}\|_{1,1}\leq \sup_{y\in X}\sum_{x\in
X}|g_{z}^{(2)}(x,y)|m(x)\leq C \sup_{y\in X}\sum_{x\in
X}e^{-\frac{\eps }{2}d(x,y)} <\infty.
\end{align*}
As $G_{z}^{2}$ is bounded for $p=1$ and $p=2$, it follows from the
Riesz-Thorin interpolation theorem that it is bounded for
$p\in[1,2]$ and by duality for $p\in[1,\infty]$.
\end{proof}

\begin{lemma}\label{l:lemma4} If $\si(L_{p})\subseteq[0,\infty)$ for all $p\in [1,\infty]$, then $\si(L_{2})\subseteq\si(L_{p})$.
\end{lemma}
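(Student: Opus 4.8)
The plan is to prove the equivalent statement that every $z_0\in\si(L_2)$ lies in $\si(L_p)$, arguing by contradiction and exploiting the squared resolvent $(L_2-z)^{-2}$, whose $\ell^p$-boundedness is exactly what Lemma~\ref{l:lemma3} provides. The two further ingredients will be Riesz--Thorin interpolation with $2$ as the midpoint of $p$ and $p^*$, and the self-adjointness of $L_2$, which forces the $\ell^2$-norm of the squared resolvent to blow up near $\si(L_2)$.

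First I would record what the hypothesis buys us. Since $\si(L_q)\subseteq[0,\infty)\subseteq\R$ for \emph{every} $q\in[1,\infty]$, the open upper half plane $H=\{z\in\C\mid \Im z>0\}$ is contained simultaneously in $\rho(L_2)$, $\rho(L_p)$ and $\rho(L_{p^*})$. By Lemma~\ref{l:lemma3}, for each $z\in\rho(L_2)$ the operator $(L_2-z)^{-2}$ extends consistently to a bounded operator $R_q(z)$ on every $\ell^q$, and this extension is locally norm-bounded on $\rho(L_2)$. The crucial identification is that $R_q(z)=(L_q-z)^{-2}$ for all $z\in H$. Indeed, both sides are analytic $B(\ell^q)$-valued functions on $H$: the right-hand side because $H\subseteq\rho(L_q)$, and the left-hand side because it is locally bounded and weakly analytic, its matrix elements $\langle R_q(z)f,g\rangle$ coinciding, for $f\in\ell^q\cap\ell^2$ and $g\in\ell^{q^*}\cap\ell^2$, with the analytic $\ell^2$-quantity $\langle(L_2-z)^{-2}f,g\rangle$. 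For $\Re z<0$ the resolvents $(L_q-z)^{-1}$ and $(L_2-z)^{-1}$ are consistent, hence so are their squares, so $R_q(z)=(L_q-z)^{-2}$ on the nonempty open set $H\cap\{\Re z<0\}$; as $H$ is connected, the identity theorem propagates this equality to all of $H$.

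With this in hand, suppose for contradiction that $z_0\in\si(L_2)$ but $z_0\in\rho(L_p)$. Then $z_0\in[0,\infty)$ is real, and by duality $z_0\in\rho(L_{p^*})$ as well. For small $t>0$ set $z=z_0+it\in H$; since $\rho(L_p)$ and $\rho(L_{p^*})$ are open and contain $z_0$, the norms $\|(L_p-z)^{-2}\|_{p,p}$ and $\|(L_{p^*}-z)^{-2}\|_{p^*,p^*}$ stay bounded as $t\downarrow0$. Because $2$ is the Riesz--Thorin midpoint of $p$ and $p^*$, i.e. $\tfrac12=\tfrac12\cdot\tfrac1p+\tfrac12\cdot\tfrac1{p^*}$, interpolating the consistent family $R_\cdot(z)$ between $\ell^p$ and $\ell^{p^*}$ and invoking the identification above yields
\[
\|(L_2-z)^{-2}\|_{2,2}\leq\|(L_p-z)^{-2}\|_{p,p}^{1/2}\,\|(L_{p^*}-z)^{-2}\|_{p^*,p^*}^{1/2},
\]
so the left-hand side remains bounded as $t\downarrow0$. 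On the other hand, self-adjointness of $L_2$ and the spectral theorem give $\|(L_2-z)^{-2}\|_{2,2}=\operatorname{dist}(z,\si(L_2))^{-2}\geq t^{-2}\to\infty$, since $z_0\in\si(L_2)$. This contradiction forces $z_0\in\si(L_p)$, and as $z_0\in\si(L_2)$ was arbitrary we obtain $\si(L_2)\subseteq\si(L_p)$.

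I expect the main obstacle to be the identification $R_q(z)=(L_q-z)^{-2}$ on $H$ rather than the interpolation or the blow-up estimate, both of which are routine. The delicate point is that consistency of the resolvents is only available a priori on the left half plane $\{\Re z<0\}$, so reaching the relevant points $z_0+it$ requires analytic continuation; this is precisely where the hypothesis $\si(L_q)\subseteq[0,\infty)$ is essential, as it places the whole connected set $H$ inside all the resolvent sets at once and thereby legitimizes the use of the identity theorem. Establishing that the extended family $R_q(\cdot)$ is genuinely analytic (from local boundedness via Lemma~\ref{l:lemma3} together with weak analyticity on the dense pairs above) is the one step that deserves care.
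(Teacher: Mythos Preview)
Your argument has one genuine gap: you invoke Lemma~\ref{l:lemma3} to produce the family $R_q(z)$, but that lemma is stated and proved \emph{under the standing assumption of uniform subexponential volume growth}, which is not among the hypotheses of Lemma~\ref{l:lemma4}. As written, your proof therefore establishes only the weaker statement in which subexponential growth is added as an extra hypothesis. (In the paper this would still suffice for the application in Theorem~\ref{main}, but it does not prove the lemma as stated.)

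The good news is that Lemma~\ref{l:lemma3} is not actually needed for your strategy. Everything you use it for---a consistent $\ell^q$-bounded analytic family on $H$ that agrees with $(L_2-z)^{-2}$---is already provided by $(L_q-z)^{-2}$ itself, once you know $H\subseteq\rho(L_q)$. The only point that requires work is consistency of $(L_p-z)^{-2}$ and $(L_{p^*}-z)^{-2}$ on $\ell^p\cap\ell^{p^*}$ for $z\in H$; this follows from consistency on the left half plane together with analytic continuation on the connected set $\rho(L_p)=\rho(L_{p^*})\supseteq H$ (either compare kernels, which are analytic in $z$, or appeal to \cite[Corollary~1.4]{HempelVoigt87}). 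Once consistency is in place, your Riesz--Thorin estimate
\[
\|(L_2-z)^{-2}\|_{2,2}\leq \|(L_p-z)^{-2}\|_{p,p}^{1/2}\,\|(L_{p^*}-z)^{-2}\|_{p^*,p^*}^{1/2}
\]
and the spectral-theorem blow-up $\|(L_2-z_0+it)^{-2}\|_{2,2}\geq t^{-2}$ finish the job exactly as you describe.

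For comparison, the paper's own proof avoids the squared resolvent and Lemma~\ref{l:lemma3} entirely. It works with the first resolvents $(L_p-z)^{-1}$, uses the Hempel--Voigt consistency result on the connected set $\rho(L_p)$, interpolates to obtain an analytic $\ell^2$-bounded extension of $(L_p-z)^{-1}$ on all of $\rho(L_p)$, and then observes that $\rho(L_2)$ is the \emph{maximal} domain of analyticity of $(L_2-z)^{-1}$, forcing $\rho(L_p)\subseteq\rho(L_2)$. Your route replaces the maximal-analyticity punchline by an explicit blow-up estimate, which is perhaps more transparent; the paper's route has the advantage of never needing to approach a boundary point and of making the role of connectedness of $\rho(L_p)$ (rather than merely $H\subseteq\rho(L_p)$) fully explicit.
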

\begin{proof}
The operators $(L_{p}-z)^{-1}$ and $(L_{p*}-z)^{-1}$ are consistent
for $z\in \{w\in\C\mid\Re w<0\}\subseteq
\rho(L_{p})=\rho(L_{p^{*}})$ by the discussion above
Lemma~\ref{l:lemma2} for all $p\in[1,\infty]$. By the assumption
$\si(L_{p})\subseteq [0,\infty)$ the resolvent sets are connected
which yields by \cite[Corollary~1.4]{HempelVoigt87} that
$(L_{p}-z)^{-1}$ and $(L_{q}-z)^{-1}$ are consistent for $z\in
\rho(L_{p})\cap\rho(L_{q})$ for $p,q\in[1,\infty]$. Moreover, by the
standard theory \cite[Lemma~8.1.3]{Davies07} $(L_{p}-z)^{-1}$ and
$(L_{p*}-z)^{-1}$ are analytic on $\rho(L_{p})=\rho(L_{p^{*}})$. By
the Riesz-Thorin theorem these resolvents can be consistently
extended to analytic $\ell^2$-bounded operators, see
\cite[Lemma~1.4.8]{Davies07}. That is, as a $\ell^2$-bounded
operator-valued function $(L_{p}-z)^{-1}$ is analytic on $\rho(L_p)$
which is consistent with $(L_2-z)^{-1}$ on $\rho(L_p)\cap\rho(L_2).$
Note that, $(L_2-z)^{-1}$ is analytic on $\rho(L_2)$ which is also
the maximal domain of analyticity. Thus, the statement follows by
unique continuation.
\end{proof}

\begin{proof}[Proof of Theorem~\ref{main}] We start by showing $\si(L_{p})\subseteq \si(L_{2})$.
For the kernel $g_{z}^{(2)}$ of $(L_2-z)^{-2}$ and fixed $x,y\in X$
the function $\rho(L_{2})\to\C$, $z\mapsto g_{z}^{(2)}(x,y)$ is
analytic. By Lemma~\ref{l:lemma3} we know that for any compact
$K\subseteq \rho(L_{2})$ the operators $(L_2-z)^{-2}$, $z\in K$ are
bounded on $\ell^{p}$, $p\in[1,\infty]$. Therefore $(L_2-z)^{-2}$ is
analytic as a family of $\ell^{p}$-bounded operators for
$z\in\rho(L_{2})$. On the other hand, $(L_{p}-z)^{-2}$ is analytic
as a family of $\ell^{p}$-bounded operators with domain of
analyticity $\rho(L_{p})$ by \cite[Lemma~3.2]{HempelVoigt86}. Since
$(L_2-z)^{-2}$ and $(L_{p}-z)^{-2}$ agree on $\{w\in\C\mid\Re
w<0\}$, by unique continuation they agree as analytic $\ell^p$
operator-valued functions on $\rho(L_{2})$. As the domain of
analyticity of $(L_{p}-z)^{-2}$ is $\rho(L_{p})$, this implies
$\rho(L_{2})\subseteq \rho(L_{p})$.

On the other hand, since  $\si(L_{p})\subseteq
\si(L_{2})\subseteq[0,\infty)$  the statement $\si(L_{2})\subseteq
\si(L_{p})$ follows from Lemma~\ref{l:lemma4}.
\end{proof}

%%%%%%%%%SECTION%%%%%%%%%%%%%%%%%%%%%%%%%%%%%%%%%%%%%%%%%%%%%%%%%%%%%
%%%%%%%%%SECTION%%%%%%%%%%%%%%%%%%%%%%%%%%%%%%%%%%%%%%%%%%%%%%%%%%%%%

\section{Proof for uniformly positive measures}\label{s:proof_upm}
In this section we consider measures that are uniformly bounded from
below by a positive constant and prove
Theorem~\ref{t:boundedmeasure}. We notice that $\inf_{x\in X}m(x)>0$
implies
\begin{align*}
    \ell^{p}\subseteq \ell^{q},\quad 1\le p\leq q\leq \infty.
\end{align*}
Moreover, by \cite[Theorem~5]{KL} we know the domains of the
generators $L_{p}$ in this case explicitly, namely
\begin{align*}
    D(L_{p}):=\{f\in\ell^{p}\mid \ow L f\in \ell^{p}\}.
\end{align*}
In particular, this gives
\begin{align*}
    D(L_{p})\subseteq D(L_{q}),\quad 1\le p\leq q\leq \infty,
\end{align*}
where $\ow L$ was defined in Section~\ref{s:operators}. Furthermore,
it can be checked directly that $C_{c}(X)\subseteq D(L_{p})$.

\begin{lemma}\label{l:resolvents} Assume
$\inf_{x\in X}m(x)>0$. Then, for all $1\leq p\leq q\leq\infty$ and
all $z\in \rho(L_{p})\cap\rho(L_{q})$ the resolvents
$(L_{p}-z)^{-1}$ and $(L_{q}-z)^{-1}$ are consistent on
$\ell^{p}=\ell^{p}\cap\ell^{q}$.
\end{lemma}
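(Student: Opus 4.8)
The plan is to avoid analytic continuation entirely and instead exploit the explicit description of the domains $D(L_r)=\{f\in\ell^r\mid \ow L f\in\ell^r\}$ from \cite[Theorem~5]{KL}, together with the inclusion $\ell^p\subseteq\ell^q$ that follows from $\inf_{x}m(x)>0$. The decisive structural fact is that every $L_r$, $r\in[1,\infty]$, is a restriction of the single pointwise operator $\ow L$ defined on $\ow F$. Consequently, if two candidate resolvent values exist, their difference solves the pointwise eigenvalue equation for $\ow L$, and this difference can be annihilated using the injectivity of $L_q-z$.

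Concretely, I would fix $1\le p\le q\le\infty$ and $z\in\rho(L_p)\cap\rho(L_q)$, and take an arbitrary $f\in\ell^p\subseteq\ell^q$. Set $u:=(L_p-z)^{-1}f\in D(L_p)$ and $v:=(L_q-z)^{-1}f\in D(L_q)$, where $v$ is well defined precisely because $f\in\ell^q$. Since both $L_p$ and $L_q$ are restrictions of $\ow L$, we have $\ow L u-zu=f=\ow L v-zv$, so the difference $w:=u-v$ satisfies $(\ow L-z)w=0$ pointwise. Now $u\in D(L_p)\subseteq D(L_q)$ and $v\in D(L_q)$ give $w\in D(L_q)$, since $D(L_q)$ is a subspace (alternatively, one sees directly that $w\in\ell^q$ and $\ow L w=\ow L u-\ow L v\in\ell^q$, which is the defining condition for $D(L_q)$). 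On $D(L_q)$ the operator $L_q$ agrees with $\ow L$, so $(L_q-z)w=(\ow L-z)w=0$. As $z\in\rho(L_q)$, the operator $L_q-z$ is injective, forcing $w=0$, i.e. $(L_p-z)^{-1}f=(L_q-z)^{-1}f$. Since $f\in\ell^p$ was arbitrary, the resolvents are consistent on $\ell^p$.

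I do not anticipate a genuine obstacle: the argument is purely algebraic once the explicit domains are available. The only points requiring care are that the pointwise linearity $\ow L(u-v)=\ow L u-\ow L v$ is legitimate, which holds because $D(L_p),D(L_q)\subseteq\ow F$ where $\ow L$ is defined, and the precise bookkeeping of where $\inf_x m(x)>0$ enters: it supplies $\ell^p\subseteq\ell^q$ (so that $v$ exists and $u\in\ell^q$) and, via the explicit domains, the inclusion $D(L_p)\subseteq D(L_q)$. In particular, unlike in the proof of Lemma~\ref{l:lemma4}, no connectedness of resolvent sets and no analytic continuation across $\rho(L_p)\cap\rho(L_q)$ is needed, which is exactly why this inclusion can be obtained without any volume growth hypothesis.
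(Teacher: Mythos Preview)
Your argument is correct and is essentially the same as the paper's: both use the explicit domain description from \cite[Theorem~5]{KL} to get $D(L_p)\subseteq D(L_q)$ and $\ell^p\subseteq\ell^q$, and then exploit that $L_p,L_q$ are restrictions of the single operator $\ow L$. The only cosmetic difference is that the paper compresses your steps into the single identity $(L_q-z)(L_p-z)^{-1}f=(L_p-z)(L_p-z)^{-1}f=f$ and then applies $(L_q-z)^{-1}$, whereas you phrase the same thing via the difference $w=u-v$ and injectivity of $L_q-z$.
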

\begin{proof} Let $1\leq p< q\leq \infty$ and $z\in \rho(L_{p})\cap\rho(L_{q})$. As $D(L_{p})\subseteq D(L_{q})$ and $L_{q}=L_{p}$ on $D(L_{p})$, we have for all $f\in\ell^{p}\subseteq\ell^{q}$
\begin{align*}
    (L_{q}-z)(L_{p}-z)^{-1}f=    (L_{p}-z)(L_{p}-z)^{-1}f=f
\end{align*}
Hence,  $(L_{p}-z)^{-1}$ and $(L_{q}-z)^{-1}$ are consistent on
$\ell^{p}=\ell^{p}\cap\ell^{q}$.
\end{proof}

\begin{proof} [Proof of Theorem~\ref{t:boundedmeasure}]
Let $p\in[1,2]$. By the lemma above the resolvents $(L_{p}-z)^{-1}$
and $(L_{p^{*}}-z)^{-1}$ are consistent for
$z\in\rho(L_{p})=\rho(L_{p^{*}})$ on $\ell^{p}$. By the Riesz-Thorin
interpolation theorem $(L_{p^{*}}-z)^{-1}$ is bounded on $\ell^{2}$.
We will show that $(L_{p^{*}}-z)^{-1}$  is an inverse of $(L_{2}-z)$
for $z\in\rho(L_p)=\rho(L_{p*})$. So, {let $z\in\rho(L_{p*})$.} As
$D(L_{2})\subseteq D(L_{p^{*}})$, {$\ell^{2}\subseteq\ell^{p^*}$}
and $L_{2}$, $L_{p^{*}}$ are restrictions of $\ow L$ we have for
$f\in D(L_{2})$
\begin{align*}
(L_{p^{*}}-z)^{-1}(L_{2}-z)f=(L_{p^{*}}-z)^{-1}(L_{p^{*}}-z)f=f.
\end{align*}
Secondly, let $f\in \ell^{2}$ and $(f_{n})$ be such that
$f_n\in\ell^{p}$ and $f_n\to f$ in $\ell^{2}$. As
$(L_{p^{*}}-z)^{-1}$ is $\ell^{2}$-bounded,
$(L_{p^{*}}-z)^{-1}f_{n}\to (L_{p^{*}}-z)^{-1}f$, $n\to\infty$, in
$\ell^{2}$. By the lemma above
$(L_{p^{*}}-z)^{-1}f_{n}=(L_{p}-z)^{-1}f_{n}\in D(L_{2})$, and,
thus,
\begin{align*}
(L_{2}-z)(L_{p^{*}}-z)^{-1}f_{n}=(L_{p^{*}}-z)(L_{p^{*}}-z)^{-1}f_{n}=f_{n}\to
f,\quad n\to\infty,
\end{align*}
in $\ell^{2}$. Since, $L_{2}$ is closed we infer
$(L_{p^{*}}-z)^{-1}f\in D(L_{2})$ and $(L_{2}-z)(L_{p^{*}}-z)^{-1}f=
f$. Hence, $(L_{p^{*}}-z)^{-1}$ is an inverse of $(L_{2}-z)$ and,
thus,  $z\in \rho(L_{2})$.
\end{proof}

\begin{Remark}The abstract reason behind Theorem~\ref{t:boundedmeasure} is that the semigroup $e^{-tL}$ is ultracontractive, i.e. a bounded operator from $\ell^{2}$ to $\ell^{\infty}$, which is a consequence of
 $e^{-tL}$ being a contraction on $\ell^{\infty}$ (as $Q$ is a Dirichlet form) and the uniform lower bound on the measure. Knowing this one can deduce by duality and interpolation that $e^{-tL}$ is a bounded operator from $\ell^{p}$ to $\ell^{q}$, $p\leq q$, (cf. \cite[Proof of Theorem~B.1.1]{Simon82}) and employ the proof of \cite[Proposition~2.1]{HempelVoigt86} or \cite[Proposition~3.1]{HempelVoigt87}
\end{Remark}

%%%%%%%%%SECTION%%%%%%%%%%%%%%%%%%%%%%%%%%%%%%%%%%%%%%%%%%%%%%%%%%%%%
%%%%%%%%%SECTION%%%%%%%%%%%%%%%%%%%%%%%%%%%%%%%%%%%%%%%%%%%%%%%%%%%%%

\section{Spectral Properties of Normalized Laplacians}\label{s:normalized}
In this section, we consider normalized Laplace operators that is we
assume $m=n$, $c\equiv0$ where $n(x)=\sum_{y\in X}b(x,y)$, $x\in X$.

\subsection{Symmetries of the spectrum}
Recall that  a graph is called \emph{bipartite} if the vertex set
$X$ can be divided into two disjoint subsets {$X_1$} and $X_2$ such
that every edge connects a vertex in $X_1$ to a vertex in $X_2$,
i.e., $b(x,y)>0$ for a pair $(x,y)\in X\times X$  implies $(x,y)\in
(X_1\times X_2)\cup(X_2\times X_1)$.

\begin{thm}\label{thm11}Assume $m=n$, {$c\equiv0$} and let {$p\in[1,\infty]$}. Then, $\sigma(L_p)$
is included in $\{z\in\C\mid|z-1|\leq1\}$ and is symmetric with
respect to $\R=\{z\in\C\mid\Im z=0\}$, i.e., $\lm\in\sigma(L_p)$ if
and only if $\overline\lm\in\sigma(L_{p})$. Moreover, if the graph
is bipartite, then $\sigma(L_p)$ is symmetric with respect to the
line $\{z\in\C: \Re  z=1\},$ i.e., $\lambda\in\sigma(L_p)$ if and
only if $(2-\lambda)\in\sigma(L_p).$
\end{thm}
The proof of the second part of the theorem is based on the
following lemma, \cite[Lemma~1.2.13]{Davies07}.

\begin{lemma}\label{eigenvalue}
Let $A: \mathcal{B}\rightarrow \mathcal{B}$ be a bounded operator on
a Banach space $\mathcal{B}$. Then $\lambda\in\sigma(A)$ if and only
if at least one of the following occurs:
\begin{itemize}
\item [(i)] $\lambda$ is an eigenvalue of A. \item[(ii)] $\lambda$
is an eigenvalue of  $A^{\ast}$, where $A^{\ast}$ is the dual
operator of A.
\item[(iii)] There exists a sequence $(f_n)$  in $
\mathcal{B}$ with $\|f_n\|=1$ and
$\lim\limits_{n\to\infty}\|Af_n-\lambda f_n\|=0.$
\end{itemize}
\end{lemma}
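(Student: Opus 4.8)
The plan is to reduce to the case $\lambda=0$ and characterize invertibility of a bounded operator on a Banach space through injectivity, surjectivity, and boundedness below. Replacing $A$ by $A-\lambda I$ (and noting $(A-\lambda I)^{*}=A^{*}-\lambda I$), the three conditions become: (i) $A$ is not injective; (ii) $A^{*}$ is not injective; (iii) $A$ is not bounded below, i.e.\ there is no $c>0$ with $\|Af\|\ge c\|f\|$ for all $f$. Since $0\in\sigma(A)$ exactly when $A$ fails to be invertible, and a bounded bijection on a Banach space is automatically invertible by the bounded inverse theorem, it suffices to show that $A$ is bijective if and only if none of (i), (ii), (iii) holds.

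First I would treat the easy direction: if $A$ is invertible, then none of (i)--(iii) can hold. Injectivity rules out (i). The bound $\|f\|=\|A^{-1}Af\|\le\|A^{-1}\|\,\|Af\|$ shows that $A$ is bounded below, ruling out (iii). Finally, surjectivity makes the range dense, so its annihilator is trivial; since $\ker A^{*}$ equals the annihilator of $\mathrm{range}\,A$, the operator $A^{*}$ is injective and (ii) fails. This already proves, by contraposition, that if any of (i)--(iii) holds then $0\in\sigma(A)$.

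For the converse I would assume (i)--(iii) all fail and deduce bijectivity. The negation of (iii) gives that $A$ is bounded below; the standard argument (the estimate $\|f_n-f_m\|\le c^{-1}\|Af_n-Af_m\|$ shows $A^{-1}$ preserves Cauchy sequences on the range) then yields that $A$ is injective with \emph{closed} range. The negation of (ii) says $\ker A^{*}=\{0\}$; using the identity $\ker A^{*}=(\overline{\mathrm{range}\,A})^{\perp}$ together with the Hahn--Banach theorem, a trivial annihilator forces $\overline{\mathrm{range}\,A}=\mathcal{B}$, i.e.\ the range is \emph{dense}. A subspace that is simultaneously closed and dense equals all of $\mathcal{B}$, so $A$ is surjective; combined with injectivity, $A$ is bijective and hence invertible.

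The main obstacle is the converse step, and specifically the bridge from the dual condition (ii) to surjectivity of $A$: one must correctly invoke the annihilator description $\ker A^{*}=(\overline{\mathrm{range}\,A})^{\perp}$ and Hahn--Banach to pass from injectivity of $A^{*}$ to density of the range, and then combine this with the closed-range consequence of (iii). Once density and closedness of the range are both in hand the conclusion is immediate, but keeping track of which hypothesis supplies which property (boundedness below $\Rightarrow$ closed range; injectivity of $A^{*}$ $\Rightarrow$ dense range) is where the care is needed.
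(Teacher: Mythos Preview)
Your proof is correct and is the standard argument for this well-known characterization. Note that the paper does not actually supply a proof of this lemma; it merely quotes the result from Davies' textbook \cite[Lemma~1.2.13]{Davies07}, so there is no ``paper's own proof'' to compare against. Your reduction to $\lambda=0$, the use of boundedness below to obtain a closed range, and the Hahn--Banach/annihilator argument to pass from injectivity of $A^{*}$ to density of the range are exactly the ingredients one expects, and they are assembled correctly.
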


\begin{proof}[{Proof of Theorem~\ref{thm11}}]
Let $p\in[1,\infty]$. As $n/m=1$ the operator $L_{p}$ is bounded on
$\ell^{p}$ by  \cite[Theorem~11]{KL2} (or \cite[Theorem~9.3]{HKLW})
with bound $2$. Moreover, the operator $L_{p}$ can be represented as
$L_{p}=I-P_{p}$ where $P_{p}$ is the transition matrix acting as
$P_{p}f(x)=\frac{1}{n(x)}\sum_{y}b(x,y)f(y)$. {Direct calculation
shows that $\|P_{p}\|_{p,p}\leq 1.$} As the spectral radius is
smaller than the norm, $\si(P_{p})\subseteq \{z\in\C\mid|z|\leq1\}$.
Now, $I$ is  a spectral shift by $1$, so the first statement
follows.

{The first symmetry statement follows from the fact that the
integral kernel of $L_p$ is real valued.}

Let now $X_{1}$ and $X_{2}$ {be a} bipartite partition of $X$. For a
function $f:X\to\C$, we denote
\begin{align*}
    \ow f= 1_{X_{1}}f-1_{X_{2}}f,
\end{align*}
where $1_{W}$ is the characteristic function of $W\subseteq X$. We
separate the proof into three cases according to the preceding
lemma.

Case 1:  $\lm$ is an eigenvalue of $L_{p}$  with eigenfunction $f$.
By direct calculation it can be checked that $\ow f$ is an
eigenfunction of $L_{p}$ for the eigenvalue $2-\lm$.

Case 2: $\lm$ is an eigenvalue of $L_{p}^{*}$. By the same argument
as in Case 1 we get that $2-\lm$ is a eigenvalue of $L_{p^{*}}$.
Therefore, by $\sigma(L_p^{*})=\sigma(L_p)$, we conclude the result.

Case 3:  $\lm$ is such that there is $f_{n}$ with $\|f_{n}\|=1$,
$n\ge 1$ and $\lim\|(L_{p}-\lm)f_{n}\|_{p}=0$. It is easy to see
that $\|\ow f_{n}\|_{p}=\|f_{n}\|_{p}$ and by direct calculation it
follows $\|(L_{p}-(2-\lm))\ow f_{n}\|_{p}=\|(L_{p}-\lm)f_{n}\|_{p}$.
Thus, the statement $(2-\lm)\in\si(L_{p})$  follows by
Lemma~\ref{eigenvalue}(iii).
\end{proof}

%%%%%%SUBSECTION%%%%%%%%%%%%%%%%%%%%%%%%%%%%%%%%%%%%%%%%%
\subsection{Cheeger constants}

Define the \emph{Cheeger constant} $\al\ge0$  to be the {maximal}
$\be\ge0$ such that for all finite $W\subseteq X$
\begin{align*}
    \beta n(W)\leq |\partial W|,
\end{align*}
where {$|\partial W|:=\sum_{(x,y)\in W\times {(X\setminus
W)}}b(x,y)$}.  For recent developments concerning Cheeger constants
see also \cite{BHJ, BKW}.

\begin{pro}\label{p:alpha} Assume $m=n$ and $c\equiv0$. Then,  $\inf\si(L_{1})=\inf\si(L_{2})$ if and only if $\al=0$.
\end{pro}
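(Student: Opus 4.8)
The plan is to show that $\inf\si(L_{1})=0$ holds unconditionally, so that (reading $\inf\si(L_{1})$ as $\inf\{\Re\lm\mid\lm\in\si(L_{1})\}$, since the $\ell^{1}$-spectrum need not be real) the asserted equivalence reduces to the single statement $\inf\si(L_{2})=0\iff\al=0$, which is exactly the content of the Cheeger inequality in the normalized setting.

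First I would establish $\inf\si(L_{1})=0$. As in the proof of Theorem~\ref{thm11}, the hypotheses $m=n$ and $c\equiv0$ give $L_{p}=I-P_{p}$ with $P_{p}f(x)=\frac{1}{n(x)}\sum_{y}b(x,y)f(y)$. The dual operator $P_{\infty}=P_{1}^{*}$ acts on $\ell^{\infty}$ by the same formula, and since $P$ is row-stochastic the constant function $\mathbf 1\in\ell^{\infty}$ satisfies $P_{\infty}\mathbf 1=\mathbf 1$. As $L_{\infty}=I-P_{\infty}$ is bounded (because $n/m=1$), $\mathbf 1$ lies in its domain and $L_{\infty}\mathbf 1=0$, so $0$ is an eigenvalue of $L_{\infty}$; hence $0\in\si(L_{\infty})=\si(L_{1})$ by $\si(L_{p})=\si(L_{p^{*}})$. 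On the other hand Theorem~\ref{thm11} gives $\si(L_{1})\subseteq\{z\mid|z-1|\le1\}\subseteq\{\Re z\ge0\}$. Therefore $0$ realizes the smallest real part in $\si(L_{1})$, i.e. $\inf\si(L_{1})=0$.

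Since $L_{2}$ is a nonnegative selfadjoint operator we have $\si(L_{2})\subseteq[0,\infty)$ and $\inf\si(L_{2})\ge0$; combined with the previous step, $\inf\si(L_{1})=\inf\si(L_{2})$ holds precisely when $\inf\si(L_{2})=0$. It remains to prove $\inf\si(L_{2})=0\iff\al=0$. As $C_{c}(X)$ is a form core, $\inf\si(L_{2})=\inf_{f\in C_{c}(X),\,f\neq0}Q(f)/\|f\|_{2}^{2}$. Testing with $f=1_{W}$ for finite $W\subseteq X$ and using $Q(1_{W})=|\partial W|$ and $\|1_{W}\|_{2}^{2}=n(W)$ gives $\inf\si(L_{2})\le\inf_{W}|\partial W|/n(W)=\al$, so $\al=0$ forces $\inf\si(L_{2})=0$. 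For the converse I would invoke the lower Cheeger inequality $\inf\si(L_{2})\ge1-\sqrt{1-\al^{2}}$, which is classical for the normalized Laplacian without further assumptions (see e.g. \cite{BHJ}) and shows $\al>0\Rightarrow\inf\si(L_{2})>0$. Together these give $\inf\si(L_{2})=0\iff\al=0$, and hence the proposition.

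The step requiring the most care is the unconditional identification $\inf\si(L_{1})=0$: the point is that the constant function, though absent from every $\ell^{p}$ with $p<\infty$, \emph{is} available as an $\ell^{\infty}$-eigenfunction of $P_{\infty}$, and that the disk localization of Theorem~\ref{thm11} confines $\si(L_{1})$ to $\{\Re z\ge0\}$ so that $0$ is genuinely the bottom. The lower Cheeger inequality only needs to be quoted, and the upper bound $\inf\si(L_{2})\le\al$ is an immediate test-function computation.
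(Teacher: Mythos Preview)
Your proof is correct and follows essentially the same line as the paper: both observe that the constant function is an $\ell^{\infty}$-eigenfunction of the bounded operator $L_{\infty}$ with eigenvalue $0$, so $0\in\si(L_{\infty})=\si(L_{1})$, and then reduce to the Cheeger inequality for the characterization $\inf\si(L_{2})=0\iff\al=0$. You supply more detail than the paper (invoking the disk localization from Theorem~\ref{thm11} to ensure $0$ is genuinely the infimum of $\si(L_{1})$, and spelling out both directions of the Cheeger equivalence), but the approach is the same.
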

\begin{proof} As $L_{\infty}$ is bounded it is easy to see that the constant
functions are eigenfunctions to the eigenvalue $0$.  Hence,
$\inf\si(L_{1})=\inf\si(L_{\infty})=0$. Now, $0=\inf\si(L_{2})$ is
equivalent to $\al=0$ by a Cheeger inequality, \cite{KL2} (cf.
\cite{DK} and \cite{DoKa88}).
\end{proof}

\begin{Remark}
The proposition can also be  obtained as a consequence of
\cite[Theorem~3.1]{Takeda}. The considerations therein show that one
direction of the result is still valid in certain situations
involving unbounded operators using the Cheeger constant defined in
\cite{BKW}. However, in this case it is usually hard to determine
whether the constant functions are in the domain of $L_{\infty}$.
Nevertheless, this is the case if the graph is stochastically
complete, (see \cite{KL} for characterizations of this case).
\end{Remark}

%%%%%%%%%SECTION%%%%%%%%%%%%%%%%%%%%%%%%%%%%%%%%%%%%%%%%%%%%%%%%%%%%%
\subsection{Spectral independence and superexponential growth}

This previous proposition shows that $\alpha=0$ is a necessary
condition for the $p$-independence of the spectrum for
$p\in[1,\infty]$. However, the next proposition shows that if we
exclude $p=\{1,\infty\}$, then we can have $p$-independence of the
spectrum even if $\alpha>0$ or if the subexponential volume growth
condition is not satisfied.

Define the \emph{Cheeger constant at infinity} $\al_{\infty}\ge0$ to
be the maximum of all $\be\ge0$  such that for some finite
$K\subseteq X$ and all finite $W\subseteq X\setminus K$ and $\beta
n(W)\leq |\partial W|.$

One can easily check that under the assumption of $m(X)=\infty,$
$\alpha>0$ if and only if $\alpha_\infty>0$: The inequality
$\al\le\al_{\infty}$ is obvious. If $\al=0$ but $\al_{\infty}>0$,
the Cheeger estimates, see e.g. \cite{Fujiwara96,Keller10, KL2}
imply $0\in\si(L_{2})$, but $0\not\in\si_{\mathrm{ess}}(L_{2})$.
This, however, means that $0$ is an eigenvalue which is impossible
as {the constant functions are not in $\ell^{2}$ due to
$m(X)=\infty$,} cf. \cite[Theorem~4.1]{GHKLW}. Hence, the next
theorem says that we have $p$-independence for $p\in(1,\infty)$ even
though $\alpha>0$.

\begin{thm}\label{t:superexponential} Assume $m=n$ and $\al_{\infty}=1$. Then, $\si(L_{p})=\si(L_{2})$ for all $p\in(1,\infty)$ and the graph has superexponential volume growth with respect to the natural graph metric, i.e., $\lim_{r\to\infty}\frac{1}{r}\log n(B_{r}(x))=\infty$, for all $x\in X$.
\end{thm}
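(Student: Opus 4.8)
The statement splits into two independent assertions; I would treat the volume growth first, since it is self-contained, and then the spectral equality.

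\emph{Superexponential growth.} I work with the natural graph metric $d_n$, so that $b(x,y)>0$ forces $d_n(x,y)=1$ and every boundary edge of a ball $B_r:=B_r(x)$ runs from the sphere $S_r:=B_r\setminus B_{r-1}$ to $S_{r+1}$. Writing $V(r):=n(B_r)$, $a_r:=n(S_r)$ and $f_r:=|\partial B_r|=\sum_{u\in S_r,\,v\in S_{r+1}}b(u,v)$, a bookkeeping of the edges incident to $S_r$ gives the two elementary facts $a_r\ge f_r$ and $V(r)=V(r-1)+a_r$. The plan is to feed the isoperimetric inequality coming from $\al_\infty=1$ into these identities. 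Fix $\beta<1$; by definition of $\al_\infty$ there is a finite $K\subseteq B_\rho$ with $\beta n(W)\le|\partial W|$ for every finite $W\subseteq X\setminus K$. Applying this to the annulus $W=B_r\setminus B_\rho$, whose boundary equals $f_\rho+f_r\le V(\rho)+f_r$ and whose mass is $V(r)-V(\rho)$, gives $f_r\ge\beta V(r)-(1+\beta)V(\rho)$. Combining with $a_r\ge f_r$ and $V(r)=V(r-1)+a_r$ and solving for $a_r$ yields
\[
 a_r\ \ge\ \frac{\beta}{1-\beta}\,V(r-1)\ -\ \frac{1+\beta}{1-\beta}\,V(\rho).
\]
Since $V(r-1)\to\infty$, the first term dominates for large $r$, so $a_r\ge\tfrac{\beta}{2(1-\beta)}V(r-1)\ge\tfrac{\beta}{2(1-\beta)}a_{r-1}$ and hence $V(r)\ge a_r$ grows at least like $(\tfrac{\beta}{2(1-\beta)})^{r}$. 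Thus $\liminf_r\tfrac1r\log n(B_r(x))\ge\log\tfrac{\beta}{2(1-\beta)}$ for \emph{each} $\beta<1$, and letting $\beta\uparrow1$ forces the limit to be $+\infty$. The only point to watch is that the exceptional set $K=K_\beta$ may grow with $\beta$, but this is harmless: each fixed $\beta$ already produces an exponential lower bound of rate $\log\tfrac{\beta}{2(1-\beta)}$, and these rates are unbounded.

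\emph{Spectral equality, soft inclusion.} The essential structural input is that $L_p=I-P_p$ is a \emph{bounded} operator for every $p$ (as $n/m=1$), that the $P_p$ are consistent with $\|P_p\|_{p,p}\le1$, and that $\si(L_p)\subseteq\{z:|z-1|\le1\}$ by Theorem~\ref{thm11}. The inclusion $\si(L_2)\subseteq\si(L_p)$ is soft: for $p\in(1,\infty)$ the exponent $2$ lies between $p$ and $p^{*}$, and the resolvents $(L_p-z)^{-1}$ and $(L_{p^{*}}-z)^{-1}$ agree for $\Re z<0$ (discussion above Lemma~\ref{l:lemma2}) and extend consistently over $\rho(L_p)=\rho(L_{p^{*}})$. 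Riesz--Thorin interpolation then yields a bounded $\ell^2$-inverse of $L_2-z$ for every $z\in\rho(L_p)$, so $\rho(L_p)\subseteq\rho(L_2)$. The analytic-continuation step needs $\rho(L_p)$ to be connected; this is automatic once the reverse inclusion below has placed $\si(L_p)$ on the real line, so I would establish the reverse inclusion first.

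\emph{Spectral equality, reverse inclusion, and the obstacle.} Since $L_2$ is selfadjoint, $\si(L_2)\subseteq[0,2]\subseteq\R$ is nowhere dense in $\C$. Because $\partial\si(L_p)\subseteq\si_{\mathrm{app}}(L_p)$ (the approximate point spectrum) for any bounded operator, it suffices to show that every approximate eigenvalue of $L_p$ lies in $\si(L_2)$: granting this, $\partial\si(L_p)\subseteq\si(L_2)\subseteq\R$, and a standard planar argument shows a compact set whose topological boundary lies on a line must have empty interior, whence $\si(L_p)=\partial\si(L_p)\subseteq\si(L_2)$. To locate an approximate eigenvalue $z$, with $f_n$ satisfying $\|f_n\|_p=1$ and $\|(L_p-z)f_n\|_p\to0$, I would pass (using reflexivity of $\ell^p$, $p\in(1,\infty)$) to a weak limit: if $f_n\rightharpoonup f\neq0$ then $f$ is a pointwise $\ow L$-eigenfunction which one promotes to an $\ell^2$-eigenfunction, so $z\in\si(L_2)$; if $f_n\rightharpoonup0$ the mass escapes to infinity and $z$ lies in the ``spectrum at infinity'', which the Cheeger estimate for $\al_\infty=1$ (see \cite{Fujiwara96,Keller10,KL2}) confines to $[1,2]\subseteq\si(L_2)$. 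The delicate step, and the main obstacle, is exactly this transfer of an escaping $p$-Weyl sequence to an $\ell^2$-statement when $m$ is not bounded below, so that $\ell^p$ and $\ell^2$ are not nested. This is where $\al_\infty=1$ must be used quantitatively: the superexponential growth established above rules out the heat-kernel/Combes--Thomas summability of Section~\ref{s:proof_usg} (the balls are far too large for $\sum_y e^{-\eps d(x,y)}$ to converge), so it is the confinement of the essential spectrum supplied by the Cheeger inequality, rather than any kernel decay, that forces the escaping part of every Weyl sequence back into $\si(L_2)$.
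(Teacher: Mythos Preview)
Your volume-growth argument is correct and more explicit than the paper, which simply invokes \cite[Theorem~1]{Fujiwaragrowth} (combined with \cite{KL2,HKW} in the weighted case); the isoperimetric recursion you set up is essentially the mechanism behind those references.

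The spectral part, however, has a genuine gap that you yourself flag but do not close. In Case~1 of your reverse inclusion you produce an $\ell^p$-eigenfunction $f$ of $\ow L$, but ``promoting'' it to $\ell^2$ is unjustified: since $m=n$ need not be bounded below, $\ell^p$ and $\ell^2$ are not nested, and you have established no decay of eigenfunctions. In Case~2 you invoke a Cheeger-type confinement of the ``spectrum at infinity'', but the Cheeger inequalities you cite are $\ell^2$-statements and give no control over an escaping $\ell^p$-Weyl sequence. (The assertion ``$[1,2]\subseteq\si(L_2)$'' is also wrong as written: here $\si_{\mathrm{ess}}(L_2)=\{1\}$, and the rest of $\si(L_2)$ is discrete.) You call this ``the main obstacle''; it is, and it remains open in your write-up.

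The paper takes a completely different route that avoids Weyl sequences. The missing idea is that $\al_\infty=1$ forces $\si_{\mathrm{ess}}(L_2)=\{1\}$ via the Cheeger inequality, which is \emph{equivalent} to $I-L_2$ being compact on $\ell^2$. Since the $I-L_p$ form a consistent family of bounded operators (boundedness because $n/m=1$), compactness interpolates: the Krasnosel'skii--Persson theorem transfers compactness from $p=2$ to all $p\in[2,\infty)$, and Schauder's theorem on adjoints then covers $p\in(1,2]$. A consistent family of compact operators has $p$-independent spectrum \cite[Theorem~4.2.15]{Davies07}, giving $\si(L_p)=\si(L_2)$ for all $p\in(1,\infty)$ in one stroke --- no separate treatment of the two inclusions, no connectedness issues, and no Weyl sequences. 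The Cheeger input is used exactly once, to manufacture compactness; after that everything is abstract functional analysis.
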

\begin{proof}It was shown in \cite{KL2}  (cf.  \cite{Keller10, Fujiwara96} for the unweighted case) that $\al_{\infty}=1$ implies
$\sigma_{\mathrm{ess}}(L_2)=\{1\}$ which is equivalent to $I-L_2$
being a compact operator. Since $I-L_p$ is a consistent family of
bounded operators for $p\in[1,\infty]$ it follows from a {well-known
result} by Krasnosel'skii and Persson (see
\cite[Theorem~4.2.14]{Davies07}) that $I-L_p$ is compact for all
$p\in [2,\infty)$. Using a theorem of Schauder (see for instance
\cite[Theorem~4.2.13]{Davies07}), we conclude that $I-L_p$ is
compact for all $p\in (1,\infty)$. Now it follows from
\cite[Theorem~4.2.15]{Davies07}  that the spectrum of
 $L_p$ is $p$-independent for all $p\in (1,\infty)$. The second part of the proposition follows directly from \cite[Theorem~1]{Fujiwaragrowth} (for the weighted case combine \cite[Theorem~19]{KL2} and \cite[Theorem~4.1]{HKW}).
\end{proof}

\begin{rem}
(a)  For examples satisfying the assumptions of the theorem above
see \cite{Keller10, Fujiwara96}.

(b) Under the assumption  $m=n$ and $\alpha>0$ one can show that
$I-L_1$ and $I-L_\infty$ are not compact operators. Assume the
contrary, i.e., $I-L_1$ and $I-L_\infty$ are compact, then
\cite[Theorem 4.2.15]{Davies07} implies that the spectrum is
$p$-independent for all $p\in[1,\infty]$. This, however, contradicts
Proposition \ref{p:alpha}.
\end{rem}

%%%%%%%%%SECTION%%%%%%%%%%%%%%%%%%%%%%%%%%%%%%%%%%%%%%%%%%%%%%%%%%%%%
%%%%%%%%%SECTION%%%%%%%%%%%%%%%%%%%%%%%%%%%%%%%%%%%%%%%%%%%%%%%%%%%%%
\section{Tessellations and curvature}\label{s:tessellations}

For this final section, we restrict  our attention to planar
tessellations and relate curvature bounds to volume growth and
$p$-independence. We show analogue statements to Proposition~1 and~2
of \cite{Sturm93} and discuss the case of uniformly unbounded
negative curvature.

We consider graphs such that $b$ takes values in $\{0,1\}$ and
$c\equiv0$. The two prominent choices for $m$ are either $m=n$ or
$m\equiv 1$. For $m=n$ the natural graph metric $d_{n}$ is an
intrinsic metric and for $m\equiv 1$ the path metric $d_{1}$ given
by
$$d_{1}(x,y)=\inf_{x=x_{0}\sim\ldots\sim x_{n}=y}\sum_{i=1}^{n}(n(x_{i-1})\vee n(x_{i}))^{-\frac{1}{2}}, \quad x,y\in X,$$ is an intrinsic metric. Both metrics have the jump size at most $1$, in particular, both metrics have finite jump size.
We denote the Laplacian with respect to $m=n$ by $\Delta^{(n)}_{p}$
and with respect to $m\equiv 1$ by $\Delta_{p}$, {$p\in[1,\infty]$}.

Note that by Theorem~\ref{t:boundedmeasure} we have for all
$p\in[1,\infty]$
\begin{align*}
    \si(\Delta^{(n)}_{2})\subseteq    \si(\Delta^{(n)}_{p})\quad\mbox{and}\quad    \si(\Delta_{2})\subseteq    \si(\Delta_{p}).
\end{align*}

Let $G$ be a planar tessellation, see \cite{BP1,BP2} for background.
Denote by $F$ the set of faces and denote the degree of a face $f\in
F$, that is the number of vertices contained in face,  by $\deg(f)$.

We define the vertex curvature $\ka:X\to\R $ by
\begin{align*}
    \ka(x)=1-\frac{n(x)}{2}+\sum_{f\in F,x\in f}\frac{1}{\deg(f)}.
\end{align*}

The following theorem is a discrete analogue of
\cite[Proposition~1]{Sturm93}.

\begin{thm}If $\ka\ge0$, then it has  quadratic volume growth both with respect to $d_{n}$ and $m=n$ and with $d_{1}$ and $m\equiv 1$.
In particular, $\si(\Delta^{(n)}_{p})=\si(\Delta^{(n)}_{2})$ and
$\si(\Delta_{p})=\si(\Delta_{2})$ for all $p\in[1,\infty]$.
\end{thm}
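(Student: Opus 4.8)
The plan is to establish quadratic volume growth and then invoke Theorem~\ref{main}. The key observation is that nonnegative vertex curvature $\ka\ge0$ is known to impose strong combinatorial restrictions on a planar tessellation. In particular, the standard Gauss--Bonnet-type combinatorics for tessellations (cf. \cite{BP1,BP2}) shows that $\ka\ge0$ forces the graph to have polynomial — in fact quadratic — growth of distance balls with respect to the combinatorial graph metric $d_n$. First I would recall or cite the relevant structure result: for a planar tessellation with $\ka\ge0$ the number of vertices in a ball satisfies $\# B_r^{d_n}(x)\le C(r^2+1)$ for all $x$ and $r\ge0$. This is the discrete analogue of the manifold statement in \cite[Proposition~1]{Sturm93}, where nonnegative curvature gives a volume comparison.

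\medskip

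Next I would translate this combinatorial bound into the two measure-weighted statements. For the case $m=n$ with the metric $d_n$, I would use the curvature bound to control $n(x)$ on balls: nonnegative curvature bounds the vertex degree (and hence $n$, since $b\in\{0,1\}$ means $n=\deg$) in a way compatible with quadratic counting, so that $n(B_r^{d_n}(x))=\sum_{y\in B_r(x)}n(y)$ is bounded by a polynomial of degree two in $r$ times $n(x)$. The point is that $\ka\ge0$ yields a uniform bound on $n(x)$ (nonnegatively curved tessellations have bounded vertex degree), so the weighted volume $n(B_r)$ and the counting measure $\# B_r$ differ only by a bounded factor. For the case $m\equiv1$ with the metric $d_1$, I would note that $d_1$ and $d_n$ are comparable precisely because the edge weights $(n(x_{i-1})\vee n(x_i))^{-1/2}$ are bounded above and below once the degree is bounded; hence a quadratic bound in one metric transfers to the other, and $m(B_r^{d_1}(x))=\# B_r^{d_1}(x)$ is again quadratic.

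\medskip

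Having quadratic volume growth in either setting, I would observe that quadratic growth trivially implies uniform subexponential growth: for every $\eps>0$ one has $C(r^2+1)\le C_\eps e^{\eps r}$ with a suitable $C_\eps$, uniformly in $x$ since the base-point dependence is already controlled. Since both $d_n$ (for $m=n$) and $d_1$ (for $m\equiv1$) are intrinsic metrics with jump size at most $1$, as recorded at the start of this section, the hypotheses of Theorem~\ref{main} are met in each case. Applying Theorem~\ref{main} then yields $\si(\Delta_p^{(n)})=\si(\Delta_2^{(n)})$ and $\si(\Delta_p)=\si(\Delta_2)$ for all $p\in[1,\infty]$, which is the desired conclusion.

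\medskip

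The main obstacle will be the first step: cleanly deducing the quadratic growth bound from $\ka\ge0$. This is where the genuine geometry of planar tessellations enters, and I would expect to rely on a combinatorial curvature estimate — either a direct Gauss--Bonnet argument summing $\ka$ over balls and using the planarity constraint $\sum_{x}\ka(x)\le\chi$, or an appeal to an established volume-growth result for nonnegatively curved tessellations in the literature. Proving the bound is uniform in the base point $x$, rather than just for a fixed root, is the subtle part, since uniform subexponential growth as defined here requires control with respect to every vertex simultaneously; I would need the degree bound from $\ka\ge0$ to hold globally to secure this uniformity.
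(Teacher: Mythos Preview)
Your proposal is correct and follows essentially the same route as the paper's own proof: cite a known quadratic volume growth result for nonnegatively curved planar tessellations, use the global vertex-degree bound forced by $\ka\ge0$ to pass between the two measure/metric pairs, and then invoke Theorem~\ref{main}. The paper differs only in specifics: it cites \cite[Theorem~1.1]{HJL} directly for the quadratic bound on $n(B_r^{d_n}(x))$ and records the explicit degree bound $n\le6$ (which immediately resolves the base-point uniformity issue you flag as an obstacle), whereas you appeal more loosely to \cite{BP1,BP2} and leave the precise source of the growth estimate open.
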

\begin{proof}By \cite[Theorem~1.1]{HJL} the volume growth for $\ka\ge0$ is bounded
quadratically with respect to the measure $m=n$ and the metric
$d_{n}$. Moreover, it is direct to check that $\ka\geq0$ implies
that the vertex degree is bounded by $6$. Hence, we have bounded
geometry and thus $d_{n}$ and $d_{1}$ are equivalent and for
$m\equiv1$ we have $m\geq n/6$. Thus, the volume growth of the graph
is {quadratically bounded} also with respect to the measure
$m\equiv1$ and the metric $d_{1}$. The 'in particular' is now a
consequence of Theorem~\ref{main}.
\end{proof}

\begin{Remark} The result can be easily extended to
planar tessellation with finite total curvature, i.e., $\sum_{x\in
X}|\ka(x)|<\infty$ or equivalently vanishing curvature outside of a
finite set, see \cite{ChenChen} and also \cite{DeVosMohar}. This can
be seen as \cite[Theorem~1.1]{HJL} easily extends to the  finite
total curvature case.
\end{Remark}

The next theorem is a discrete analogue of
\cite[Proposition~2]{Sturm93}. We say that a graph with measure $m$
has\emph{ at least exponential volume growth} with respect to a
metric $d$ if for the corresponding distance balls $B_{r}(x)$ about
some vertex $x$,
\begin{align*}
    \mu=\liminf_{r\to\infty}\inf_{x\in X}\frac{1}{r}\log m(B_{r}(x))>0.
\end{align*}

\begin{thm}If  $\ka<0$, then the tessellation has at least exponential volume growth, both with respect to $d_{n}$ and $m=n$ and with $d_{1}$ and $m\equiv1$. Moreover, $\inf \si(\Delta^{(n)}_{1})\neq  \inf \si(\Delta^{(n)}_{2})$ and if we have additionally bounded geometry, then $ \inf\si(\Delta_{1})\neq  \inf\si(\Delta_{2})$
\end{thm}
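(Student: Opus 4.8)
The whole statement can be organised around a single geometric fact: negative curvature forces the Cheeger constant $\al$ to be positive. The plan is to prove $\al>0$ first and then feed it into the growth and spectral machinery. The hypothesis $\ka<0$ is only pointwise, but for planar tessellations the negative values of $\ka$ are bounded away from $0$: the quantity $\sum_{f\ni x}1/\deg(f)$ can approach $n(x)/2-1$ from below only through a discrete set of rationals, so $\ka<0$ in fact forces $\ka\le-c$ for some uniform $c>0$ (the Egyptian-fraction phenomenon, cf.~\cite{DeVosMohar,ChenChen}). A combinatorial Gauss--Bonnet argument, or directly the isoperimetric estimates for tessellations in \cite{Keller10,KL2}, then converts the uniform bound $\ka\le-c$ into $|\partial W|\ge\al\,n(W)$ for all finite $W$ with $\al>0$.

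Granting $\al>0$, the spectral statement for $\Delta^{(n)}$ is immediate from Proposition~\ref{p:alpha}: since $m=n$ and $c\equiv0$, that proposition says $\inf\si(\Delta^{(n)}_1)=\inf\si(\Delta^{(n)}_2)$ if and only if $\al=0$, so $\al>0$ yields $\inf\si(\Delta^{(n)}_1)\neq\inf\si(\Delta^{(n)}_2)$. The growth with respect to $(d_n,m=n)$ I would obtain by telescoping: the graph is locally finite, so the combinatorial balls $B_r=B_r(x)$ in $d_n$ are finite, and as $d_n$ has jump size $1$ every edge leaving $B_r$ ends in $B_{r+1}\setminus B_r$, giving $\al\,n(B_r)\le|\partial B_r|\le n(B_{r+1})-n(B_r)$. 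Thus $n(B_{r+1})\ge(1+\al)n(B_r)$, and iterating gives $n(B_r(x))\ge(1+\al)^{r}n(x)\ge(1+\al)^{r}$ uniformly in $x$, i.e.\ at least exponential growth.

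For $(d_1,m\equiv1)$ the measure is counting measure, and since every weight $(n(x)\vee n(y))^{-1/2}\le1$ one has $d_1\le d_n$, hence the $d_n$-ball $B_r(x)$ is contained in the $d_1$-ball and $\#B^{d_1}_r(x)\ge\#B^{d_n}_r(x)$. It then suffices that the vertex count $\#B^{d_n}_r(x)$ grows exponentially, which is once more the content of the curvature--growth estimates for $\ka\le-c$ (e.g.~\cite{Keller10}), transported along $d_1\le d_n$. Under the extra assumption of bounded geometry this step is trivial, since then $d_1\simeq d_n$ and $\#\simeq n$ are comparable. Bounded geometry also delivers the last spectral statement: $\Delta_\infty$ is bounded, the constant functions lie in $\ell^\infty$ and are annihilated by $\Delta_\infty$, so $0\in\si(\Delta_\infty)=\si(\Delta_1)$ and, as $\Delta\ge0$, $\inf\si(\Delta_1)=0$; meanwhile $\#W\le n(W)$ gives $|\partial W|/\#W\ge\al>0$, and the Cheeger inequality for bounded vertex degree (\cite{KL2}, cf.~\cite{DoKa88}) forces $\inf\si(\Delta_2)>0$. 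Hence $\inf\si(\Delta_1)=0<\inf\si(\Delta_2)$.

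The routine parts are the telescoping estimate and the two spectral reductions, all essentially formal once $\al>0$ is in hand. The real work, and the main obstacle, is the geometric input: promoting the pointwise hypothesis $\ka<0$ to a uniform gap $\ka\le-c$ and turning that into the edge-isoperimetric bound $\al>0$, and, in the unbounded-degree regime, into the exponential \emph{vertex} growth needed for the $d_1$ statement. This is precisely where the tessellation structure and the combinatorial Gauss--Bonnet theorem are indispensable.
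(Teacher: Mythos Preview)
Your argument is correct and follows the same skeleton as the paper: promote $\ka<0$ to a uniform gap (the paper quotes Higuchi's bound $\ka\le -1/1806$), deduce $\al>0$, and then read off both the growth and the spectral statements. The only genuine difference is in how you extract exponential growth. You run the elementary telescoping estimate $\al\,n(B_r)\le|\partial B_r|\le n(B_{r+1})-n(B_r)$ directly from the Cheeger constant, whereas the paper first passes through the spectral bound $\inf\si(\Delta_2^{(n)})\ge\al^2/2>0$ (and $\inf\si(\Delta_2)\ge\inf\si(\Delta_2^{(n)})\cdot\inf_x\deg(x)>0$) and then invokes \cite[Corollary~4.2]{HKW}, which turns a positive bottom of the $\ell^2$-spectrum into exponential volume growth with respect to the corresponding intrinsic metric. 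Your route is more self-contained for $(d_n,n)$; the paper's route handles both $(d_n,n)$ and $(d_1,1)$ in one stroke without needing the auxiliary observation $d_1\le d_n$ or a separate reference for exponential \emph{vertex} growth. For the final spectral claim under bounded geometry you and the paper do exactly the same thing, namely reproduce the proof of Proposition~\ref{p:alpha} for $m\equiv1$.
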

\begin{proof} \cite[Theorem~C, Proposition~2.1]{Higuchi01} implies that if $\ka(x)<0$ for all $x\in X$, then $\sup_{x\in X}\ka(x)\leq -1/1806<0$. Hence, by \cite[Theorem~B]{Higuchi01} we have positive Cheeger constant, $\al>0$. Thus by a Cheeger inequality \cite{DK}
$\inf\si(\Delta_{2}^{(n)})\ge{{\al^2}/{2}}>0$ and by elementary
computations (cf. \cite{Keller10}) we infer
$\inf\si(\Delta_{2})\ge\inf\si(\Delta_{2}^{(n)})\cdot\inf_{x}\deg(x)>0$.
By \cite[Corollary~4.2]{HKW} this implies at least exponential
volume growth  {with respect to both metrics}. The second statement
follows along the lines of the proof of Proposition~\ref{p:alpha}.
\end{proof}

We end this section by an analogue theorem of
Theorem~\ref{t:superexponential}.

\begin{thm}If  $\inf_{K\subseteq X,\,\mathrm{\scriptsize{finite}}}\sup_{x\in X\setminus K}\ka(x)=-\infty$, then $\si(\Delta_{p}^{(n)})=\si(\Delta_{2}^{(n)})$ and $\si(\Delta_{p})=\si(\Delta_{2})$ for all $p\in(1,\infty)$. In particular, the spectrum is purely discrete and the eigenfunctions of $\Delta_{2}$ are contained in $\ell^{p}$ for all $p\in(1,\infty)$.
\end{thm}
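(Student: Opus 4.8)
The plan is to reduce both equalities to the compactness argument of Theorem~\ref{t:superexponential}, with the curvature hypothesis now playing the role that $\al_\infty=1$ played there; the one genuinely new feature is that for $m\equiv1$ the operator $\Delta_2$ is unbounded, which forces me to argue with resolvents rather than with $I-L$.

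First I would extract the spectral input hidden in the curvature condition. The assumption $\inf_{K}\sup_{x\in X\setminus K}\ka(x)=-\infty$ is precisely the rapid branching condition, and since $\ka(x)\ge 1-\deg(x)/2$ for a tessellation, it forces $\deg(x)=n(x)\to\infty$ outside every sufficiently large finite set. By \cite{Keller10} (cf. \cite{Fujiwara96}) this gives $\sigma_{\mathrm{ess}}(\Delta^{(n)}_2)=\{1\}$, equivalently that the transition operator $I-\Delta^{(n)}_2$ is compact on $\ell^2$. For $m\equiv1$ the same divergence of the degrees yields superexponential volume growth in the metric $d_1$, and the volume-growth bounds for the essential spectrum (as in \cite{HKW}, cf. \cite{KL2}) push $\inf\sigma_{\mathrm{ess}}(\Delta_2)$ to $+\infty$, so that $\sigma_{\mathrm{ess}}(\Delta_2)=\emptyset$ and $(\Delta_2-z)^{-1}$ is compact on $\ell^2$ for $z<0$.

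For the normalized operator the proof of Theorem~\ref{t:superexponential} then applies verbatim: $I-\Delta^{(n)}_p$ is a consistent family of bounded operators, so compactness on $\ell^2$ propagates by Krasnosel'skii--Persson \cite[Theorem~4.2.14]{Davies07} to $p\in[2,\infty)$ and by Schauder \cite[Theorem~4.2.13]{Davies07} to $p\in(1,\infty)$, whence $\si(\Delta^{(n)}_p)=\si(\Delta^{(n)}_2)$ by \cite[Theorem~4.2.15]{Davies07}. For $m\equiv1$ I would run the identical scheme one level down, on resolvents: since $\inf_x m(x)=1>0$, Lemma~\ref{l:resolvents} shows that $(\Delta_p-z)^{-1}$, $z<0$, is a consistent family of bounded operators, so compactness of $(\Delta_2-z)^{-1}$ propagates to $p\in(1,\infty)$ exactly as above and $p$-independence of $\si(\Delta_p)$ follows from \cite[Theorem~4.2.15]{Davies07} applied to the resolvent family. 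The concluding assertions are then immediate: triviality of the essential spectrum makes the spectra discrete, and the spectral projection onto an eigenvalue, being a contour integral of the consistent (compact) resolvents, is finite rank and consistent across $p\in(1,\infty)$; hence its common range, which contains every $\ell^2$-eigenfunction, lies in $\bigcap_{p\in(1,\infty)}\ell^p$.

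The step I expect to be the main obstacle is the case $m\equiv1$. One must first genuinely establish compactness of $(\Delta_2-z)^{-1}$ for the unbounded operator $\Delta_2$ from the curvature condition, and then check that Krasnosel'skii--Persson and Schauder, which are phrased for a consistent family of bounded operators, may legitimately be applied to the resolvent family: that the family is consistent and uniformly bounded over the relevant range of $p$, so that the interpolation and duality arguments survive despite the unboundedness of the $\Delta_p$ themselves.
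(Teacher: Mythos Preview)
Your overall strategy aligns with the paper's. For $\Delta^{(n)}_p$ both you and the paper reduce to Theorem~\ref{t:superexponential} via $\al_\infty=1$ (the paper simply cites \cite{Keller10} for the implication $\ka\to-\infty\Rightarrow\al_\infty=1$). For $\Delta_p$ with $m\equiv1$ the paper obtains discreteness of $\sigma(\Delta_2)$ from \cite[Theorem~3]{Keller10}, hence compactness of the semigroup on $\ell^2$, and then invokes \cite[Theorem~1.6.3]{Davies89} as a black box; you instead run the Krasnosel'skii--Persson/Schauder argument directly on the resolvent family $(\Delta_p-z)^{-1}$. That is a legitimate and essentially equivalent alternative, and your worry in the last paragraph is unfounded: for fixed $z<0$ the resolvents form a consistent family (Lemma~\ref{l:resolvents}) with $\|(\Delta_p-z)^{-1}\|_{p,p}\le 1/|z|$ uniformly in $p$, so \cite[Theorems~4.2.13--15]{Davies07} apply without further ado.

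There is, however, one genuine error. Your justification for $\sigma_{\mathrm{ess}}(\Delta_2)=\emptyset$ in the case $m\equiv1$ is backwards: the volume-growth bounds of \cite{HKW} (and Brooks-type inequalities generally) are \emph{upper} bounds on $\inf\sigma_{\mathrm{ess}}$ in terms of the exponential growth rate $\mu$, so superexponential growth yields only the vacuous estimate $\inf\sigma_{\mathrm{ess}}\le\infty$ and cannot push the essential spectrum upward. The correct input is the one the paper uses: \cite[Theorem~3]{Keller10} shows directly that the curvature hypothesis is equivalent to pure discreteness of $\sigma(\Delta_2)$. Once you replace that one step, the rest of your argument is sound.
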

\begin{proof}
By \cite[Theorem~3]{Keller10} the curvature assumption is is
equivalent to pure discrete spectrum of $\Delta_{2}$. Moreover, this
is equivalent to compact resolvent and compact semigroup on
$\ell^2$. Thus, the statement for $\Delta_{p}^{(n)}$ follows from
\cite[Theorem~1.6.3]{Davies89}. On the other hand, the curvature
assumption implies $\al_{\infty}=1$, \cite{Keller10}, and the
statement about $\Delta^{(n)}_{p}$ follows from
Theorem~\ref{t:superexponential}.
\end{proof}

{\bf Acknowledgements.} The first and second author thank J\"urgen
Jost for many inspiring discussions on this topic.The first author acknowledges the financial support of the Alexander von Humboldt Foundation.  The research leading to these results has received
funding from the European Research Council under the European
Union's Seventh Framework Programme (FP7/2007-2013) / ERC grant
agreement n$^\circ$ 267087. The second author
thanks Zhiqin Lu for introducing him this problem and stimulating
discussions at Fudan University. The third author thanks Daniel Lenz for generously sharing his knowledge on intrinsic metrics and  acknowledges the financial support of the German Science Foundation (DFG), the Golda Meir Fellowship and the Israel Science Foundation (grant no. 1105/10 and  no. 225/10).

\bibliography{LpIndependence}
\bibliographystyle{alpha}

\end{document}